\documentclass[leqno]{siamltex704}
\usepackage{amsmath}
\usepackage{graphicx}
\usepackage{mathrsfs}
\usepackage{float}
\usepackage{amsfonts,amssymb}
\usepackage[comma, sort&compress, numbers]{natbib}
\usepackage{dsfont}
\usepackage{pifont}
\usepackage{hyperref}
\usepackage{multirow}
\usepackage{color}
\usepackage{geometry}
\geometry{hcentering}
\allowdisplaybreaks[4]
\usepackage{geometry}
\geometry{hcentering,scale=0.7}
\usepackage{algorithm}

\newtheorem{remark}{Remark}[section]
\newtheorem{example}{Example}[section]

\newcommand{\bu}{{\bf u}}

\newcommand{\bw}{{\bf w}}

\newcommand{\be}{{\bf e}}
\newcommand{\bv}{{\mathbf v}}

\def\T{{\mathcal T}}

\def\pT{{\partial T}}
\def\l{{\langle}}
\def\r{{\rangle}}

\def\bbf{{\bf f}}
\def\bg{{\bf g}}

\def\bn{{\bf n}}

\def\3bar{{|\hspace{-.02in}|\hspace{-.02in}|}}

\newcommand{\bm}[1]{\mbox{\boldmath{$#1$}}}
\def\bPhi{\bm\Phi}
\def\bRT{\bm{\pi}^{RT}_{h}}

\setlength{\parindent}{0.25in} \setlength{\parskip}{0.08in}

\title {An Arbitrary Order Locking-Free  Weak Galerkin Method for Linear Elasticity Problems Based on A Reconstruction Operator }

\author{
	Fuchang Huo\thanks{School of Mathematics, Jilin University, Changchun 130012, Jilin, China (huofc22@mails.jlu.edu.com)}
	\and Ruishu
	Wang\thanks{School of Mathematics, Jilin University, Changchun 130012, Jilin, China (wangrs\_math@jlu.edu.cn)} 
	\and Yanqiu Wang\thanks{School of Mathematical Sciences, Nanjing Normal University, Nanjing 210023, Jiangsu, China (yqwang@njnu.edu.cn)} 
	\and Ran Zhang\thanks{School of Mathematics, Jilin University, Changchun 130012, Jilin, China (zhangran@jlu.edu.cn)}
}
\begin{document}

\maketitle

\begin{abstract}
The weak Galerkin (WG) finite element method has shown great potential in solving various type of partial differential equations. In this paper, we propose an arbitrary order locking-free WG method for solving linear elasticity problems, with the aid of an appropriate $H(div)$-conforming displacement reconstruction operator. Optimal order locking-free error estimates in both the $H^1$-norm and the $L^2$-norm are proved, i.e., the error is independent of the $Lam\acute{e}$ constant $\lambda$. Moreover, the term $\lambda\|\nabla\cdot \mathbf{u}\|_k$ does not need to be bounded in order to achieve these estimates. We validate the accuracy and the robustness of the proposed locking-free WG algorithm by numerical experiments.
\end{abstract}

\begin{keywords} weak Galerkin finite element method, linear elasticity problem, grad-div formulation, $H(div)$-conforming displacement reconstruction, locking-free.
\end{keywords}

\begin{AMS}
Primary 65N30, 65N15, 74S05; Secondary 35J50, 74B05.
\end{AMS}

\pagestyle{myheadings}

\section{Introduction}
In this paper, we consider the  linear elasticity problems as follows: find displacement vector $\bu$ satisfying
\begin{eqnarray}\label{primal_model}
	-\mu\Delta\bu-(\lambda+\mu)\nabla(\nabla\cdot\bu)&=&\bbf,\qquad\,\text{in}\ \Omega,\\
	\bu&=&\widehat{\bg},\qquad\text{on}\ \Gamma, \label{bc1}
\end{eqnarray}
where $\Omega$ is an open bounded, connected domain in $\mathbb R^d\ (d = 2, 3)$, and the boundary $\Gamma=\partial\Omega$ is Lipschitz continuous. $\bbf$ is the body force, and $\widehat{\bg}$ is the boundary displacement function. In addition,  $\mu$ and $\lambda$ are $Lam\acute{e}$ constants, satisfying $0<\mu_1\le \mu \le \mu_2\ll\infty$ and $0<\lambda<\infty$.

The weak formulation of (\ref{primal_model})-(\ref{bc1}) can be written as: Finding $\bu\in [H^1(\Omega)]^d$ satisfying $\bu=\widehat{\bg}$ on $\partial\Omega$ and
\begin{eqnarray}\label{model-weak}
	\mu(\nabla\bu,\nabla\bv)+(\lambda+\mu)(\nabla
	\cdot\bu,\nabla \cdot\bv)&=&(\bbf,\bv), \quad  \forall \,\bv\in
	[H_0^1(\Omega)]^d,
\end{eqnarray}
where $H^1(\Omega)$ and $H_0^1(\Omega)$ are the standard Sobolev spaces defined as follows:
\begin{eqnarray*}
	H^1(\Omega)&=&\{v\in L^2(\Omega):\ \nabla v\in [L^2(\Omega)]^d\},\\
	H_0^1(\Omega)&=&\{v\in H^1(\Omega): \ v|_{\Gamma}=0\}.
\end{eqnarray*}

In elasticity theory, it is  known that the ``locking" phenomenon \cite{babuska-suri1,b2008,Ciarlet} arises when the $Lam\acute{e}$ constant $\lambda$ approaches infinity. 
Conventional finite element scheme often fails to converge to the exact solution or does not reach optimal convergence in such cases. This phenomenon is primarily attributed to the dependence of the finite element error estimates on the $Lam\acute{e}$ constant $\lambda$. Consequently, the coefficients of the error estimates  tend towards infinity when $\lambda\rightarrow \infty$, significantly impacting the computational accuracy and efficiency of the finite element scheme.
In order to overcome the locking phenomenon, some effective techniques have been proposed in various discretizations, such as, the nonconforming finite element method (NC-FEM) \cite{CR2012,LeeC,shidongyang2012}, the mixed finite element method (MFEM) \cite{Adams2005,chenHu2018,hj2015,hs2007}, the discontinuous Galerkin (DG) finite element method  \cite{Hansbo,Wihler}, the virtual element method (VEM) \cite{VirtualelasticArtioli,VirtualelasticVeiga,VirtualelasticGain}, and  the weak Galerkin (WG) finite element method  \cite{Liu,lockingw,Yisongyang}, etc.

The main purpose of this paper is to propose an arbitrary order locking-free WG method for linear elasticity problems (\ref{primal_model})-(\ref{bc1}). The WG method is an extension of the classical Galerkin finite element method. It employs weak functions and introduces  weak differential operators to replace the traditional differential operators. A stabilizer is added to ensure the weak continuity of the numerical solution. Comparing to the standard finite element method, it is usually much more convenient to design and implement high-order WG schemes.

The WG method was first proposed by J. Wang and  X. Ye for solving  second order elliptic problems \cite{wysec2}, and later applied to Navier-Stokes equations \cite{HMY2018,zzlw2018,ZhangLin2019}, Brinkman equations \cite{jlf,Mulin2022,SFLZ}, Maxwell's equations \cite{MLW14,SLM2017,wangchunmei}, biharmonic equations \cite{mwy,wwbi,ZhangZhai15}, linear elasticity \cite{Liu,lockingw,Yisongyang}, eigenvalue problems \cite{Zhaite2,Zhaite1}, interface problems \cite{wangyanqiu1,mulin2019,penghui2020}, etc.

Scholarly investigation of the WG method for linear elasticity problems can be located  in \cite{chenxie2016,Liu,Harper2020,huowangwangzhang2023,liuyujie2022,lockingw,wangzhang2018,Yisongyang,zhangyujie2015}.
Among them, methods in \cite{chenxie2016,wangzhang2018,zhangyujie2015} employ the Hellinger-Reissner mixed formulation, which is immune to the locking phenomenon. Others use the primal formulation with various techniques to avoid locking:
\cite{liuyujie2022,lockingw} present locking-free WG methods equivalent to mixed formulations, \cite{Liu, Harper2020,Yisongyang} propose  locking-free and lowest-order WG approaches using local Raviart-Thomas spaces to approximate the gradient of the displacement. In recent research,  \cite{wangwangliu2023}  constructs a penalty-free WG method on quadrilateral meshes. However, it should be noted that this method relies on utilizing high-order regularity assumptions. \cite{huowangwangzhang2023} develops a robust lowest-order WG approach that eradicates the dependence on $\lambda$ in error estimation by modifying the WG test function. This approach does not necessitate $\lambda\|\nabla\cdot\mathbf{u}\|_1$ to be bounded.

This paper uses the WG method combined with the $H(div)$-conforming displacement reconstruction technique to solve linear elasticity problems (\ref{primal_model})-(\ref{bc1}). In order to address the issue of locking, we modify the WG test functions by utilizing an $H(div)$-conforming displacement reconstruction operator. The error estimates are independent of the parameter $\lambda$, i.e., the scheme is locking-free.

The paper is structured as follows.
In Section \ref{Weak_Galerkin_Scheme}, the WG finite element scheme of linear elasticity problems (\ref{primal_model})-(\ref{bc1}) is presented.
In Section \ref{Stability_Conditions}, we study the property of the $H(div)$-conforming displacement reconstruction operator.
In Section \ref{Error_Estimates}, error equations and error estimates are established.
In Section \ref{Numerical_Results}, we present numerical results to  demonstrate the validity of theoretical analysis.

\section{The WG finite element scheme}\label{Weak_Galerkin_Scheme}
In this section, we present the WG scheme for (\ref{primal_model})-(\ref{bc1}) and study the wellposedness of the WG scheme.

Let ${\cal T}_h$ be a shape regular simplicial partition \cite{wy1202} of the domain
$\Omega\subset \mathbb{R}^d\,(d=2,3)$.
For each $T\in \T_h$, the diameter of $T$ is denoted by $h_T$. The mesh size of the partition $\T_h$ is defined by $ h=\max_{T\in \T_h} h_T$. Additionally, ${\cal E}_h$ represents the set of all edges or faces in the partition ${\cal T}_h$, while ${\cal E}^0_h$ denotes the set of all interior edges or faces in ${\cal T}_h$.
Let $k\geq1$ be an arbitrary positive integer, and $P_k(T)$ refers to the set of polynomials defined on $T$ with a maximum degree of $k$.

Define the weak finite element space $V_h$ as
\begin{align*}
	V_h=\{\bv =\{\bv_0,\bv_{b}\}: \ \bv_0|_T \in [P_k(T)]^d, \bv_{b} |_e \in [P_{k}(e)]^d, T\in \T_h, e\in{\cal E}_h\}.
\end{align*}

A subspace of $V_h$ is defined as
\begin{equation*}
	V^0_h=\{\bv =\{\bv _0,\bv_b\}\in V_h:\ \bv_b=\textbf{0} ~\text{on}~ \Gamma\}.
\end{equation*}

Define the weak differential operators and the $H(div)$-conforming displacement reconstruction operator as follows.

\begin{definition} \cite{wysec2,wy1202, wy1302}
	For $\bv =\{\bv_0,\bv_{b}\} \in V_h$, define $\nabla_{w}\bv$  and $\nabla_{w}\cdot \bv$ on each $T\in {\cal T}_h$, respectively, to be the unique matrix-valued polynomial in
	$[P_{k-1}(T)]^{d\times d}$ and the unique polynomial in $P_{k}(T)$
	satisfying
	\begin{eqnarray}\label{weakgradient}
		(\nabla_{w} \bv,\varphi)_T&=&-(\bv_0,\nabla \cdot
		\varphi)_T+\langle \bv_b,  \varphi \bn
		\rangle_{\partial T},\qquad \forall \,\varphi\in[P_{k-1}(T)]^{d\times d},\\
		(\nabla_{w}\cdot \bv,\phi)_T&=&-(\bv_0,\nabla
		\phi)_T+\langle \bv_b\cdot \bn,
		\phi\rangle_{\partial T},\qquad \forall\, \phi\in P_{k}(T),\label{weakdiv}
	\end{eqnarray}
	where $\bn$ is the unit outward normal direction on $\partial T$.
	
\end{definition}
Denote by $RT_k(T)$ the Raviart--Thomas space on $T\in {\cal T}_h$ by
$$
RT_k(T)=[P_k(T)]^d+{\bf{x}}P_k(T),\,d=2,3.
$$
\begin{definition} \cite{MuyezhangStkoes2021,wangMuStkoes2021}
	Define the  displacement reconstruction operator $\bRT: V_h\to  H(div;\Omega):=\{\bw\in [L^2(\Omega)]^d:\, \nabla\cdot \bw \in L^2(\Omega)\}$, such that $\bRT(\bv)\in RT_k(T)$ for all $\bv=\{\bv_0,\bv_b \}\in V_h$ and $T\in \T_h$ by
	\begin{eqnarray}\label{EQ:RT1}
		\int_{T} \bRT(\bv)\cdot \bw dT&=&\int_{T} \bv_0 \cdot \bw dT,\qquad  \forall \,\bw\in[P_{k-1}(T)]^{d},
		\\
		\int_{e} \bRT(\bv)\cdot \bn \phi ds&=& \int_{e} \bv_b \cdot \bn \phi ds, \qquad \forall\, \phi\in P_{k}(e), e\subset \partial T, \label{EQ:RT2}
	\end{eqnarray}
	where $\bn$ is the unit outward normal direction on $\partial T$.
\end{definition}
It is clear that one has $\nabla\cdot \bRT(\bv) \in P_k(T)$ for all $T\in\T_h$. From the property of Raviart--Thomas elements, we know that $\bRT(\bv)\cdot \bn|_e\in P_k(e)$ for all $e\in{\cal E}_h$. Hence (\ref{EQ:RT2}) immediately implies that 
\begin{equation*}
	\bRT(\bv)\cdot \bn=\bv_b\cdot \bn, \quad on\,\,each\,\, e\in{\cal E}_h.
\end{equation*}

For $\bw,\bv\in V_h$, we introduce some bilinear forms as follows:
\begin{eqnarray}\label{EQ:stabilizer}
	\mathcal{S}_h(\bw,\bv)&=&\sum_{T\in\mathcal{T}_h}h_T^{-1}\langle \bw_0-\bw_b,\bv_0-\bv_b\rangle_{\partial T},
	\\
	\mathcal{A}_h(\bw,\bv)&=&\mu\sum_{T\in\mathcal{T}_h}(\nabla_w\bw,\nabla_w\bv)_T
	+(\lambda+\mu)\sum_{T\in\mathcal{T}_h}(\nabla_w\cdot\bw,\nabla_w\cdot\bv)_T,\label{EQ:bilinearF}
	\\
	\mathcal{A}^s_h(\bw,\bv)&=&\mathcal{A}_h(\bw,\bv)+\mathcal{S}_h(\bw,\bv).\label{EQ:bilinearForm}
\end{eqnarray}

For each edge or face $e\in\mathcal{E}_h$, denote by $Q_b$  the $L^2$ orthogonal projection onto $[P_{k}(e)]^d$.

Now, we propose the following robust WG scheme.

\begin{algorithm}
	\caption{Robust WG Algorithm}
	\label{algo-primal}
	A new WG scheme for (\ref{model-weak}) is given by seeking $\bu_h=\{\bu_0, \bu_b\}\in V_h$ with $\bu_b
	= Q_b \widehat{\bg}$ on $\Gamma $ such that
	\begin{eqnarray}\label{WGA_primal}
		\mathcal{A}^s_h(\bu_h,\bv)=(\bbf,\bRT(\bv)), \qquad\forall\, \bv=\{\bv_0, \bv_b\}\in
		V_h^0.
	\end{eqnarray}
\end{algorithm}

\begin{theorem}\label{unqu}
	The WG scheme (\ref{WGA_primal}) has a unique solution.
\end{theorem}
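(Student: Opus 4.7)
Since (\ref{WGA_primal}) is a square linear system on the finite-dimensional space $V_h^0$, existence and uniqueness are equivalent, so it suffices to show that the homogeneous problem (i.e.\ $\bbf=0$ and $\widehat{\bg}=0$, so that $\bu_h\in V_h^0$) admits only the trivial solution. The natural tool is the energy identity obtained by testing with $\bv=\bu_h$: this gives
\begin{equation*}
\mu\sum_{T\in\T_h}\|\nabla_w\bu_h\|_T^2+(\lambda+\mu)\sum_{T\in\T_h}\|\nabla_w\cdot\bu_h\|_T^2+\sum_{T\in\T_h}h_T^{-1}\|\bu_0-\bu_b\|_{\partial T}^2=0.
\end{equation*}
Since $\mu\ge\mu_1>0$ and each term is nonnegative, I immediately read off that $\nabla_w\bu_h\equiv 0$ on every $T$ and that $\bu_0=\bu_b$ on every $\partial T$.

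Next I would convert $\nabla_w\bu_h=0$ into information about the classical gradient of $\bu_0$. Taking $\varphi\in[P_{k-1}(T)]^{d\times d}$ in (\ref{weakgradient}) and integrating by parts on the $(\bv_0,\nabla\cdot\varphi)_T$ term gives
\begin{equation*}
(\nabla_w\bu_h,\varphi)_T=(\nabla\bu_0,\varphi)_T+\langle\bu_b-\bu_0,\varphi\bn\rangle_{\partial T}.
\end{equation*}
Using $\bu_0=\bu_b$ on $\partial T$, the boundary term vanishes, so $(\nabla\bu_0,\varphi)_T=0$ for all $\varphi\in[P_{k-1}(T)]^{d\times d}$. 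Since $\nabla\bu_0$ itself lies in $[P_{k-1}(T)]^{d\times d}$, choosing $\varphi=\nabla\bu_0$ yields $\nabla\bu_0=0$, so $\bu_0$ is constant on each $T$.

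Finally I would propagate this across the mesh. For two adjacent elements $T_1,T_2$ sharing a face $e$, both constants $\bu_0|_{T_i}$ equal the common trace $\bu_b|_e$ (a single-valued polynomial in $[P_k(e)]^d$), so $\bu_b|_e$ is in fact the same constant and $\bu_0$ is a global constant on the connected domain $\Omega$. The boundary condition $\bu_b=0$ on $\Gamma$ then forces this constant to be zero, and consequently $\bu_b\equiv 0$ on every edge, completing the argument $\bu_h=0$.

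\textbf{Main obstacle.} Essentially none of the steps are delicate: the only place that needs care is ensuring that $\nabla_w\bu_h=0$ together with $\bu_0-\bu_b=0$ actually implies $\nabla\bu_0=0$ strongly. This rests on the degree matching $\nabla\bu_0\in[P_{k-1}(T)]^{d\times d}$ with the test space in the definition of $\nabla_w$, which holds precisely because $V_h$ uses $[P_k(T)]^d$ for $\bv_0$ and $[P_{k-1}(T)]^{d\times d}$ for the weak-gradient range. Note that the reconstruction operator $\bRT$ plays no role here since the right-hand side vanishes in the homogeneous problem, so uniqueness is independent of any property of $\bRT$.
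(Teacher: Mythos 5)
Your proof is correct and follows essentially the same route as the paper: reduce to the homogeneous square system, use the energy identity from testing with the solution itself to get $\nabla_w\bu_h=0$ and $\bu_0=\bu_b$ on $\partial T$, then integrate by parts in the weak-gradient definition to conclude $\nabla\bu_0=0$ elementwise and propagate the constant to zero via the boundary condition. The only cosmetic difference is that the paper phrases the reduction as subtracting two solutions rather than invoking the homogeneous problem directly, and you spell out the propagation of the constant across interior faces slightly more explicitly; both are the same argument.
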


\begin{proof}
	For a finite dimensional linear equation, we just need to prove the uniqueness of the solution, and the existence follows.
	
	Let $\bu_h^{(j)}=\{\bu_0^{(j)}, \bu_b^{(j)}\}\in V_h,\
	j=1,2$ be two solutions of (\ref{WGA_primal}), we have 
	$\bu_b^{(j)} = Q_b \widehat{\bg}$ on $ \Gamma$ and
	\begin{eqnarray*}
		\mathcal{A}^s_h(\bu_h^{(j)},\bv)=(\bbf,\bRT(\bv)), \quad\forall \,\bv=\{\bv_0,
		\bv_b\}\in V_h^0, \ j=1,2.
	\end{eqnarray*}
	Let  $\bw=\bu_h^{(1)}-\bu_h^{(2)}$ be the difference between two solutions, then we obtain that $\bw\in V_h^0$ and
	\begin{eqnarray}\label{Uniqueness}
		\mathcal{A}^s_h(\bw,\bv)=0, \qquad\forall \,\bv=\{\bv_0, \bv_b\}\in V_h^0.
	\end{eqnarray}
	By choosing $\bv=\bw$ in (\ref{Uniqueness}), we arrive at
	$$
	\mathcal{A}^s_h(\bw,\bw) = 0.
	$$
	From the definition of $\mathcal{A}^s_h(\cdot,\cdot)$, we get
	\begin{eqnarray*}
		\mu\sum_{T\in\mathcal{T}_h}(\nabla_w\bw,\nabla_w\bw)_T+(\lambda+\mu)\sum_{T\in\mathcal{T}_h}(\nabla_w\cdot\bw,\nabla_w\cdot\bw)_T\\
		+\sum_{T\in\mathcal{T}_h}h_T^{-1}\langle
		\bw_0-\bw_b,\bw_0-\bw_b\rangle_{\partial T}=0,
	\end{eqnarray*}
	which leads to
	\begin{eqnarray}
		\nabla_w\bw&=&0, \qquad \text{in}\  T,\label{EQ:weakgradto0}\\
		\nabla_w\cdot\bw&=&0, \qquad \text{in}\  T,\\
		\bw_0-\bw_b&=&0,  \qquad \text{on}\  \pT.\label{EQ:v0vbto0}
	\end{eqnarray}
	Using the definition of the discrete weak gradient (\ref{weakgradient}), we have
	\begin{eqnarray*}
		0&=&(\nabla_w\bw,\varphi)_T
		=-(\bw_{0},\nabla\cdot\varphi)_T+\langle\bw_{b},\varphi\bn\rangle_\pT
		\\
		&=&(\nabla\bw_{0},\varphi)_T-\langle
		\bw_{0}-\bw_{b},\varphi\bn\rangle_\pT,\quad \forall\,\varphi\in [P_{k-1}(T)]^{d\times d}.
	\end{eqnarray*}
	Letting $\varphi=\nabla\bw_{0}$ in the equation, together with $\bw_{0}-\bw_{b}=0$, we obtain $\nabla\bw_0=0$ on each element
	$T$. This implies that $\bw_{0}=constant$ on each element $T$. Thus, from $\bw_{0}=\bw_{b}$ on $\pT$ and
	$\bw_{b}=\textbf{0}$ on $\Gamma$, we  get $\bw_{0}\equiv0$ and $\bw_{b}\equiv0$ in $\Omega$. This completes the proof.
\end{proof}

\section{Properties of bilinear forms and  the reconstruction operator}\label{Stability_Conditions} In this section, we  investigate  some properties of bilinear forms and the $H(div)$-conforming displacement reconstruction operator.

Define
\begin{equation}\label{3barnorm}
	\3bar\bv\3bar =\left (\sum_{T\in {\cal T}_h}\|\nabla_w \bv\|_T^2+ h_T^{-1}\| \bv_0-\bv_b\|^2_{\partial
		T}\right ) ^{\frac{1}{2}},\quad \forall \,  \bv\in V_h.
\end{equation}
Similar to the proof of Theorem \ref{unqu}, it is not hard to see that
\begin{lemma}\label{lemmanorm}
	$\3bar\cdot\3bar$ is a norm in the space $V^0_h$.
\end{lemma}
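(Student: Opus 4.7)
The plan is to verify the three defining properties of a norm: positive homogeneity, the triangle inequality, and positive definiteness. The first two properties are routine. Positive homogeneity, namely $\3bar\alpha\bv\3bar = |\alpha|\,\3bar\bv\3bar$, follows from the linearity of $\nabla_w$ and of the trace maps $\bv\mapsto \bv_0|_{\partial T}$ and $\bv\mapsto \bv_b$. The triangle inequality is a direct consequence of Minkowski's inequality applied to the $\ell^2$ sum of the $L^2$-type semi-norms $\|\nabla_w \bv\|_T$ and $h_T^{-1/2}\|\bv_0-\bv_b\|_{\partial T}$. Thus the only substantive task is positive definiteness on $V_h^0$.

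For the definiteness step I would reproduce the argument already used in the uniqueness part of Theorem \ref{unqu}. Suppose $\bv=\{\bv_0,\bv_b\}\in V_h^0$ with $\3bar\bv\3bar=0$. Since the expression under the square root is a sum of nonnegative terms, we obtain
\begin{equation*}
    \nabla_w \bv = 0 \text{ on each } T\in\T_h, \qquad \bv_0-\bv_b=0 \text{ on each } \partial T.
\end{equation*}
Applying the definition (\ref{weakgradient}) of the discrete weak gradient and integrating by parts gives
\begin{equation*}
    0=(\nabla_w\bv,\varphi)_T=(\nabla\bv_0,\varphi)_T-\langle \bv_0-\bv_b,\varphi\bn\rangle_{\partial T}
\end{equation*}
for every $\varphi\in[P_{k-1}(T)]^{d\times d}$. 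Using $\bv_0-\bv_b=0$ on $\partial T$ and choosing $\varphi=\nabla\bv_0\in[P_{k-1}(T)]^{d\times d}$ yields $\nabla\bv_0=0$, so that $\bv_0$ is a constant vector on each $T\in\T_h$.

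It then remains to propagate these constants to zero across the mesh. Since $\bv_b$ is single-valued on every face $e\in\E_h$ and $\bv_0=\bv_b$ on each $\partial T$, the elementwise constant values of $\bv_0$ must agree across any shared face; by connectivity of $\T_h$, a single constant vector is obtained on all of $\Omega$. The homogeneous boundary condition $\bv_b=\mathbf{0}$ on $\Gamma$, built into $V_h^0$, then forces this constant to be zero, whence $\bv_0\equiv\mathbf{0}$ and $\bv_b\equiv\mathbf{0}$. The main (though modest) subtlety I would anticipate is this propagation step: one must invoke both the single-valuedness of $\bv_b$ on interior faces and the boundary condition to conclude, rather than treating each element in isolation.
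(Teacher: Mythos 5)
Your proof is correct and follows essentially the same route the paper intends: the paper disposes of Lemma \ref{lemmanorm} by pointing to the argument in Theorem \ref{unqu}, which is exactly the positive-definiteness argument you reproduce (weak gradient plus stabilizer vanishing $\Rightarrow$ $\bv_0$ piecewise constant, then propagation via $\bv_b$ and the boundary condition). Your explicit treatment of the homogeneity/triangle-inequality steps and of the cross-element propagation is just a more detailed write-up of the same idea.
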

Moreover, it is also obvious that
\begin{lemma}\label{lemmacoerv}
	There exists a constant  $\delta >0$, such that
	\begin{equation}\label{EQ:coercivity}
		\delta  \3bar \bv
		\3bar^2\leq \mathcal{A}^s_h(\bv,\bv), \qquad \forall\,\bv\in V_h^0.
	\end{equation}
\end{lemma}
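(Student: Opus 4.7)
The statement is essentially a direct algebraic lower bound, so the plan is short. Recall
\[
\mathcal{A}^s_h(\bv,\bv) = \mu\sum_{T\in\T_h}\|\nabla_w\bv\|_T^2 + (\lambda+\mu)\sum_{T\in\T_h}\|\nabla_w\cdot\bv\|_T^2 + \sum_{T\in\T_h}h_T^{-1}\|\bv_0-\bv_b\|^2_{\partial T},
\]
while $\3bar\bv\3bar^2 = \sum_T\|\nabla_w\bv\|_T^2 + \sum_T h_T^{-1}\|\bv_0-\bv_b\|^2_{\partial T}$. The key observation is that the divergence term is nonnegative under the assumption $\lambda+\mu>0$ (which holds since $\lambda>0$ and $\mu\ge\mu_1>0$), so it can simply be discarded in the lower bound.

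First I would drop the $(\lambda+\mu)\|\nabla_w\cdot\bv\|_T^2$ contribution, giving
\[
\mathcal{A}^s_h(\bv,\bv) \;\ge\; \mu\sum_{T\in\T_h}\|\nabla_w\bv\|_T^2 + \sum_{T\in\T_h}h_T^{-1}\|\bv_0-\bv_b\|^2_{\partial T}.
\]
Next, using the uniform lower bound $\mu\ge\mu_1>0$ from the problem statement, I would factor out $\delta := \min(\mu_1,1) > 0$ (independent of $\lambda$) and conclude
\[
\mathcal{A}^s_h(\bv,\bv) \;\ge\; \delta\left(\sum_{T\in\T_h}\|\nabla_w\bv\|_T^2 + \sum_{T\in\T_h}h_T^{-1}\|\bv_0-\bv_b\|^2_{\partial T}\right) = \delta \3bar\bv\3bar^2.
\]
This already yields (\ref{EQ:coercivity}) on all of $V_h$, not just $V_h^0$; the restriction to $V_h^0$ only becomes essential for showing that $\3bar\cdot\3bar$ is actually a norm (Lemma \ref{lemmanorm}), which is guaranteed by the argument in Theorem \ref{unqu} using the homogeneous boundary condition $\bv_b=\mathbf{0}$ on $\Gamma$.

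There is essentially no obstacle here; the estimate is a direct consequence of the nonnegativity of the grad-div contribution and the uniform positivity of $\mu$. The only thing worth emphasizing is that the constant $\delta$ produced is \emph{independent} of $\lambda$, which is exactly the property one wants from a locking-free scheme and which justifies the use of this coercivity estimate in the later error analysis of Section \ref{Error_Estimates}.
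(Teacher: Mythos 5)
Your argument is correct and is precisely the (omitted) argument the paper has in mind: the paper states this lemma without proof as ``obvious'', and your proof—dropping the nonnegative $(\lambda+\mu)\|\nabla_w\cdot\bv\|_T^2$ term and taking $\delta=\min(\mu_1,1)$, independent of $\lambda$—is exactly that obvious estimate. Nothing further is needed.
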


For $T\in {\cal T}_h$, let $Q_0$ be the $L^2$ orthogonal
projection onto $[P_k(T)]^d$, ${\cal P}_h$ be the $L^2$ orthogonal
projection onto $P_{k}(T)$, and ${\bm \Pi}_h$ be the $L^2$ orthogonal
projection onto $[P_{k-1}(T )]^{d\times d}$.
Recall that for $e\in
{\cal E}_h$, $Q_b$ is the $L^2$ orthogonal projection onto $[P_{k}(e)]^d$.
Combining $Q_0$ and $Q_b$, we define $Q_h\bu=\{Q_0\bu,Q_b\bu\}\in V_h$.

\begin{lemma}\cite{wy1302} \label{ProjectionSwap} For any $\bv \in [H^1(\Omega)]^d$, we have
	\begin{align}\label{projectiondiv}
		\nabla_w \cdot(Q_h \bv)=&\,{\cal P}_h (\nabla \cdot
		\bv),  \\
		\nabla_w (Q_h \bv)=&\, {\bm \Pi}_h (\nabla \bv).\label{projectiongrad}
	\end{align}
\end{lemma}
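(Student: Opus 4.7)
The plan is to establish both identities by the standard commuting-diagram argument: fix an arbitrary polynomial test function in the target space of the weak operator, expand the inner product with $\nabla_w(Q_h\bv)$ (or $\nabla_w\cdot(Q_h\bv)$) via the defining identities (\ref{weakgradient})--(\ref{weakdiv}), use the defining properties of the $L^2$-projections $Q_0$ and $Q_b$ to strip them off, apply integration by parts on $T$, and finally recognise the result as the $L^2$-projection of the classical operator applied to $\bv$.

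For (\ref{projectiondiv}), I would take an arbitrary $\phi\in P_k(T)$ and apply (\ref{weakdiv}) to $Q_h\bv=\{Q_0\bv,Q_b\bv\}$, obtaining $(\nabla_w\cdot(Q_h\bv),\phi)_T=-(Q_0\bv,\nabla\phi)_T+\langle Q_b\bv\cdot\bn,\phi\rangle_{\partial T}$. The crucial polynomial bookkeeping is: $\nabla\phi\in[P_{k-1}(T)]^d\subset[P_k(T)]^d$, so $(Q_0\bv,\nabla\phi)_T=(\bv,\nabla\phi)_T$ by definition of $Q_0$; and on each simplicial face $e\subset\partial T$ the normal $\bn$ is constant, hence $\phi\bn\in[P_k(e)]^d$ and the $Q_b$ can be removed in the boundary term as well. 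Integration by parts on $T$ then gives $(\nabla\cdot\bv,\phi)_T$, which equals $({\cal P}_h(\nabla\cdot\bv),\phi)_T$ because $\phi\in P_k(T)$. Arbitrariness of $\phi$ yields (\ref{projectiondiv}).

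For (\ref{projectiongrad}), the same template is applied with a matrix-valued test function $\varphi\in[P_{k-1}(T)]^{d\times d}$ in (\ref{weakgradient}). The degree check is now $\nabla\cdot\varphi\in[P_{k-2}(T)]^d$ and $\varphi\bn|_e\in[P_{k-1}(e)]^d$, both of which again sit inside the target spaces of $Q_0$ and $Q_b$, so both projections can be dropped. Integration by parts converts the resulting expression into $(\nabla\bv,\varphi)_T$, and since $\varphi\in[P_{k-1}(T)]^{d\times d}$ this equals $({\bm\Pi}_h(\nabla\bv),\varphi)_T$. Taking $\varphi$ arbitrary finishes the second identity.

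No substantial obstacle is anticipated: both identities are of the standard ``weak operator equals $L^2$-projection of the strong operator'' type and reduce, once the degrees are matched, to direct applications of the definitions plus integration by parts. The only step warranting mild care is the boundary contribution, where one must verify that multiplying the test function by $\bn$ keeps it inside the space onto which $Q_b$ projects; this is where the simplicial partition assumption in Section \ref{Weak_Galerkin_Scheme}, which makes $\bn$ constant on each face, plays its role.
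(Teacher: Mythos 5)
Your proof is correct and follows essentially the same route as the paper's: apply the defining relations (\ref{weakdiv}) and (\ref{weakgradient}) to $Q_h\bv$, strip off $Q_0$ and $Q_b$ against the polynomial test data, integrate by parts on $T$, and identify the result with ${\cal P}_h(\nabla\cdot\bv)$ or ${\bm \Pi}_h(\nabla\bv)$. Your explicit degree bookkeeping (in particular that $\bn$ is constant on each simplicial face, so the boundary test functions remain in the range of $Q_b$) is precisely the justification the paper's proof leaves implicit.
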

\begin{proof}
	For all $\varphi\in P_k(T)$, from (\ref{weakdiv}) and the integration by parts, we have
	\begin{eqnarray*}
		(\nabla_w \cdot(Q_h \bv),\varphi)_T&=&-(Q_0\bv,\nabla \varphi)_T+\langle Q_b\bv \cdot \bn,\varphi\rangle_\pT \\
		&=&-(\bv,\nabla \varphi)_T+\langle \bv \cdot \bn,\varphi\rangle_\pT \\
		&=&(\nabla \cdot\bv,\varphi)_T \\
		&=&({\cal P}_h (\nabla \cdot \bv),\varphi)_T,
	\end{eqnarray*}
	which leads to  (\ref{projectiondiv}).
	
	For any $\tau\in [P_{k-1}(T)]^{d\times d}$, by using (\ref{weakgradient}) and the integration by parts, we obtain
	\begin{eqnarray*}
		(\nabla_w (Q_h \bv),\tau)_T&=&-(Q_0\bv,\nabla \cdot \tau)_T+\langle Q_b\bv,\tau \bn\rangle_\pT \\
		&=&-(\bv,\nabla \cdot \tau)_T+\langle \bv,\tau \bn\rangle_\pT \\
		&=&(\nabla\bv,\tau)_T \\
		&=&({\bm \Pi}_h (\nabla \bv),\tau)_T,
	\end{eqnarray*}
	which proves (\ref{projectiongrad}).
\end{proof}

Now, we present some results of the $H(div)$-conforming displacement reconstruction operator.
\begin{lemma}\label{RTPr}\cite{brezzi2012mixed,guermond2021finite} For any ${\bm \omega} \in RT_k(T)$, there holds
	\begin{eqnarray}\label{RT_div}
		\nabla\cdot {\bm \omega}  &\in & P_k(T),\\
		{\bm \omega} \cdot \bn &\in & P_k(\partial T), \label{RT_part}\\
		\|{\bm \omega} \|_T&\leq & \left (\|{\cal P}^{k-1}_h {\bm \omega}\|_T^2 +\sum_{e \subset  {\partial T}} h_T\| {\bm \omega}\cdot \bn\|^2_{e}\right ) ^{\frac{1}{2}}, \label{RT_pe}
	\end{eqnarray}
	where ${\cal P}^{k-1}_h$ denotes the $L^2$ projection onto $P_{k-1}(T)$, $\bn$ is the unit outward normal direction on $\partial T$.
\end{lemma}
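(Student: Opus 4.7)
The three statements have very different characters, so I would handle them separately, saving the norm inequality for last.

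For (\ref{RT_div}), I would simply unpack the definition $RT_k(T)=[P_k(T)]^d+\mathbf{x}P_k(T)$: writing ${\bm\omega}=\mathbf{p}+\mathbf{x}q$ with $\mathbf{p}\in[P_k(T)]^d$ and $q\in P_k(T)$, a direct computation gives $\nabla\cdot{\bm\omega}=\nabla\cdot\mathbf{p}+dq+\mathbf{x}\cdot\nabla q$, each term of which is in $P_k(T)$. For (\ref{RT_part}), the same decomposition yields ${\bm\omega}\cdot\bn=\mathbf{p}\cdot\bn+(\mathbf{x}\cdot\bn)q$ on a face $e$; since $e$ lies in an affine hyperplane with constant normal $\bn$, the function $\mathbf{x}\cdot\bn$ is a constant $c_e$ on $e$, and both $\mathbf{p}\cdot\bn|_e$ and $c_e\,q|_e$ are in $P_k(e)$.

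For the norm bound (\ref{RT_pe}), the key observation is that the standard Raviart--Thomas degrees of freedom on $T$, namely the moments $\int_T {\bm\omega}\cdot\mathbf{w}\,dT$ for $\mathbf{w}\in[P_{k-1}(T)]^d$ and the normal-trace moments $\int_e ({\bm\omega}\cdot\bn)\phi\,ds$ for $\phi\in P_k(e)$, form a unisolvent set on $RT_k(T)$. Equivalently, the linear map sending ${\bm\omega}\in RT_k(T)$ to the pair $({\cal P}^{k-1}_h{\bm\omega},\,\{{\bm\omega}\cdot\bn|_e\}_{e\subset\partial T})$ is injective, hence an isomorphism onto its image. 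My plan is to first establish the inequality on the reference element $\hat T$: since $RT_k(\hat T)$ is finite-dimensional, all norms are equivalent, so
\begin{equation*}
\|\hat{\bm\omega}\|_{\hat T}^2 \le C\Bigl(\|{\cal P}^{k-1}\hat{\bm\omega}\|_{\hat T}^2+\sum_{\hat e\subset\partial\hat T}\|\hat{\bm\omega}\cdot\hat\bn\|_{\hat e}^2\Bigr).
\end{equation*}
Then I would transport the estimate to a general simplex $T$ via the contravariant Piola transform associated with the affine map $F:\hat T\to T$, under which $RT_k$ is invariant and the normal trace transforms cleanly.

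The main technical obstacle is bookkeeping the $h_T$ factor in the boundary term under the Piola pull-back. I would use the standard scalings $|\det DF|\sim h_T^d$, $\|DF\|,\|DF^{-1}\|\sim h_T$, together with the Piola identity $\hat{\bm\omega}=|\det DF|\,DF^{-1}({\bm\omega}\circ F)$, to obtain $\|\hat{\bm\omega}\|_{\hat T}^2\sim h_T^{d-2}\|{\bm\omega}\|_T^2$, and the analogous $\|\hat{\bm\omega}\cdot\hat\bn\|_{\hat e}^2\sim h_T^{d-3}\|{\bm\omega}\cdot\bn\|_e^2$ for the boundary normal trace; commuting ${\cal P}^{k-1}$ with the Piola transform then converts the reference-element bound into (\ref{RT_pe}) with precisely one factor of $h_T$ in front of the boundary terms. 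The shape-regularity of $\mathcal{T}_h$ is used here so that the constants coming from the reference element are independent of $T$.
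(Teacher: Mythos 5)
The paper itself offers no proof of this lemma: it is quoted from the cited monographs \cite{brezzi2012mixed,guermond2021finite}, so there is no in-paper argument to compare against. Your route is the standard one and is structurally sound. The decomposition ${\bm \omega}=\mathbf{p}+\mathbf{x}q$ settles (\ref{RT_div}) and (\ref{RT_part}) correctly (including the key observation that $\mathbf{x}\cdot\bn$ is constant on each face of a simplex), and for (\ref{RT_pe}) the combination of unisolvence of the Raviart--Thomas degrees of freedom, norm equivalence on the reference simplex, and the contravariant Piola transform is exactly how this estimate is established in the literature. One cosmetic remark: the inequality as printed in the lemma carries no multiplicative constant, while your argument necessarily produces a generic $C$ depending on $k$, $d$ and shape regularity; this is harmless, since every later use in the paper, e.g. (\ref{RTEs}), absorbs such a constant anyway.

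There is, however, a concrete slip in your scaling bookkeeping. For an affine map $F:\hat T\to T$ one has $\|DF\|\sim h_T$ but $\|DF^{-1}\|\sim h_T^{-1}$, not $h_T$ as you wrote; and under the Piola transform the normal trace satisfies $\hat{\bm\omega}\cdot\hat{\bn}=\sigma_e\,({\bm\omega}\cdot\bn)\circ F$ with surface Jacobian $\sigma_e\sim h_T^{d-1}$, so that $\|\hat{\bm\omega}\cdot\hat{\bn}\|^2_{\hat e}\sim h_T^{d-1}\|{\bm\omega}\cdot\bn\|^2_e$, not $h_T^{d-3}$. Taken literally, your exponents do not give your stated conclusion: dividing the reference-element bound by the volume factor $h_T^{d-2}$ would place $h_T^{-1}$, rather than $h_T$, in front of the boundary terms. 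With the corrected face scaling the arithmetic works out as intended, since $h_T^{d-1}/h_T^{d-2}=h_T$, and (\ref{RT_pe}) (with a constant) follows. Finally, the commutation of ${\cal P}^{k-1}_h$ with the affine Piola transform, which you invoke in passing, is true but deserves its one-line verification: the Piola map sends $[P_{k-1}(T)]^d$ onto $[P_{k-1}(\hat T)]^d$ and the orthogonality relation is preserved because $DF$ is constant.
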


\begin{lemma}  For all $\bv=\{\bv_{0},\bv_{b}\}\in V_h$, there holds
	\begin{equation}\label{RTProp}
		\nabla\cdot (\bRT(\bv))=\nabla_{w}\cdot\bv.
	\end{equation}
	Moreover, we have
	\begin{equation}\label{RTEs}
		\sum_{T\in{\cal	 T}_h}\|\bRT(\bv)-\bv_{0}\|^2_T \leq C\sum_{T\in{\cal T}_h}h_T\|\bv_0-\bv_b\|^2_{\partial T}\leq Ch^2\3bar \bv\3bar^2.
	\end{equation}
\end{lemma}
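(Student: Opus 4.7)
The plan is to treat the two assertions separately. For the identity $\nabla\cdot(\bRT(\bv))=\nabla_w\cdot\bv$, I would exploit the fact that both sides lie in $P_k(T)$ (the left side by Lemma~\ref{RTPr}, the right side by construction) and verify that they produce identical pairings against an arbitrary $\phi\in P_k(T)$. On the left, integration by parts gives
\begin{equation*}
(\nabla\cdot\bRT(\bv),\phi)_T=-(\bRT(\bv),\nabla\phi)_T+\langle \bRT(\bv)\cdot\bn,\phi\rangle_{\partial T}.
\end{equation*}
Since $\nabla\phi\in[P_{k-1}(T)]^d$, the defining relation (\ref{EQ:RT1}) replaces $\bRT(\bv)$ by $\bv_0$ in the volume term, and since $\phi|_e\in P_k(e)$, relation (\ref{EQ:RT2}) replaces $\bRT(\bv)\cdot\bn$ by $\bv_b\cdot\bn$ in the boundary term. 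The result is exactly the defining formula (\ref{weakdiv}) for $(\nabla_w\cdot\bv,\phi)_T$, and the identity follows.

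For the estimate (\ref{RTEs}), I would observe that $\bRT(\bv)-\bv_0\in RT_k(T)$ because $[P_k(T)]^d\subset RT_k(T)$, which lets me apply the Raviart--Thomas norm equivalence (\ref{RT_pe}):
\begin{equation*}
\|\bRT(\bv)-\bv_0\|_T^2\le \|{\cal P}^{k-1}_h(\bRT(\bv)-\bv_0)\|_T^2+\sum_{e\subset\partial T}h_T\|(\bRT(\bv)-\bv_0)\cdot\bn\|_e^2.
\end{equation*}
The first term on the right vanishes: by (\ref{EQ:RT1}), $\bRT(\bv)-\bv_0$ is $L^2$-orthogonal to $[P_{k-1}(T)]^d$. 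For the boundary term, (\ref{EQ:RT2}) together with the observation (already noted after the definition) that $\bRT(\bv)\cdot\bn=\bv_b\cdot\bn$ on each $e$ gives $(\bRT(\bv)-\bv_0)\cdot\bn=(\bv_b-\bv_0)\cdot\bn$, which is bounded in $L^2(e)$ by $\|\bv_0-\bv_b\|_e$. Summing the contributions of the faces of $T$ yields the first inequality in (\ref{RTEs}).

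Finally, to pass from $\sum_T h_T\|\bv_0-\bv_b\|_{\partial T}^2$ to $h^2\3bar\bv\3bar^2$, I would simply factor out one $h_T$ using $h_T\le h$ and rewrite the remaining sum as $h\sum_T h_T\cdot h_T^{-1}\|\bv_0-\bv_b\|_{\partial T}^2\le h^2\sum_T h_T^{-1}\|\bv_0-\bv_b\|_{\partial T}^2$, which is bounded by $h^2\3bar\bv\3bar^2$ by the definition (\ref{3barnorm}). No step here is technically challenging; the only place where care is needed is recognizing that ${\cal P}_h^{k-1}$ in (\ref{RT_pe}) acts component-wise on vector fields, so that the first-moment condition (\ref{EQ:RT1}) genuinely kills the projected difference. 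Once that is seen, the remainder is essentially bookkeeping.
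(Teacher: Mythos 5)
Your proposal is correct and follows essentially the same route as the paper: both parts test $\nabla\cdot\bRT(\bv)$ against $P_k(T)$ and invoke (\ref{EQ:RT1})--(\ref{EQ:RT2}) to recover (\ref{weakdiv}), and both apply (\ref{RT_pe}) to $\bRT(\bv)-\bv_0\in RT_k(T)$, kill the projected term by the orthogonality in (\ref{EQ:RT1}), and reduce the normal trace to $(\bv_0-\bv_b)\cdot\bn$ before bounding by $h^2\3bar\bv\3bar^2$. No gaps.
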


\begin{proof}
	From the definition of the weak divergence (\ref{weakdiv}) and the definition of the reconstruction operator $\bRT$ in (\ref{EQ:RT1})-(\ref{EQ:RT2}), we obtain
	\begin{align*}
		(\nabla\cdot (\bRT(\bv)),q)_T&=-(\bRT(\bv),\nabla q)_T + \langle \bRT(\bv) \cdot  \bn, q\rangle_{\partial T}\\
		&= -(\bv_0,\nabla q)_T + \langle \bv_{b} \cdot  \bn, q\rangle _{\partial T} \\
		&= (\nabla_w \cdot  \bv, q)_T,
	\end{align*}
	for all $q\in P_k(T)$. Using Lemma \ref{RTPr}, ones gets (\ref{RTProp}).
	
	Using (\ref{RT_pe}) and noticing that $\bv_0|_T\in P_k(T)\subset RT_k(T)$, we have
	\begin{equation}\label{RT_Estimate1}
		\begin{split}
			&\|\bv_{0}-\bRT(\bv)\|^2_T\\
			\le& \| {\cal P}^{k-1}_h(\bv_0-\bRT(\bv))\|^2_T+\sum_{e\in{\partial T}}h_T\|(\bv_0-\bRT(\bv))\cdot \bn\|^2_e.
		\end{split}
	\end{equation}
	Next, we estimate the two terms in the right-hand side of (\ref{RT_Estimate1}) one by one.
	According to definition of $\bRT$ in (\ref{EQ:RT1})-(\ref{EQ:RT2}), we have
	\begin{equation}\label{RT_Estimate11}
		\begin{split}
			&\|{\cal P}^{k-1}_h( \bv_0-\bRT(\bv))\|^2_T\\
			=&({\cal P}^{k-1}_h(\bv_0-\bRT(\bv)),{\cal P}^{k-1}_h(\bv_0-\bRT(\bv)))_T\\
			=&({\cal P}^{k-1}_h( \bv_0-\bRT(\bv)), \bv_0-\bRT(\bv))_T\\
			=&({\cal P}^{k-1}_h( \bv_0-\bRT(\bv)),\bv_0)_T-({\cal P}^{k-1}_h( \bv_0-\bRT(\bv)),\bv_0)_T\\
			=&0.
		\end{split}
	\end{equation}
	From the property of $RT_{k}(T)$ in (\ref{RT_part}) and the definition of  $\bRT$  in (\ref{EQ:RT1})-(\ref{EQ:RT2}), we get
	\begin{equation}\label{RT_Estimate111}
		\begin{split}
			&\|( \bv_0-\bRT(\bv))\cdot \bn\|^2_e\\
			=&\l(\bv_0-\bRT(\bv))\cdot \bn,( \bv_0-\bRT(\bv))\cdot \bn\r_e\\
			=&\l(\bv_0-\bv_b)\cdot \bn,( \bv_0-\bv_b)\cdot \bn\r_e\\
			=&\|(\bv_0-\bv_b)\cdot \bn\|^2_e.
		\end{split}
	\end{equation}
	Substituting (\ref{RT_Estimate11})- (\ref{RT_Estimate111}) into (\ref{RT_Estimate1}), we obtain 
	\begin{equation*}
		\sum_{T\in{\cal	 T}_h}\|\bRT(\bv)-\bv_{0}\|^2_T \leq C\sum_{T\in{\cal T}_h}h_T\|\bv_0-\bv_b\|^2_{\partial T}\leq Ch^2\3bar \bv\3bar^2.
	\end{equation*}
	This completes the proof of the lemma.	 
\end{proof}

\section{Error Estimate}\label{Error_Estimates}
In this section, we establish the error equation and study the convergence rate of the WG scheme (\ref{WGA_primal}).

Let $\bu_h=
\{\bu_0, \bu_b\} \in V_h$ be the discrete solution to the WG scheme (\ref{WGA_primal})  and $\bu$ be the exact solution to (\ref{primal_model}). Define the error function $\textbf{e}_h$ as follows:
\begin{align}\label{erfuction}
	\be_h&=\{\be_0,\be_b\}=\{Q_0\bu-\bu_0,
	Q_b\bu-\bu_b\}.
\end{align}
It is clear that $\be_h\in V_h^0$.

\subsection{Error Equation}
The goal of this section is to construct the error equation between the discrete solution $\bu_h$ and the exact solution $\bu$.

\begin{lemma} \label{ErEq}  Let $\be_h$ be the error function, there holds
	\begin{align}\label{ErrorEquation}
		\mathcal{A}^s_h(\be_h,\bv)=
		\Theta_{\bu}(\bv), \qquad \forall\, \bv\in V_h^0,
	\end{align}
	where
	\begin{align}\label{l}
		\Theta_{\bu}(\bv)&={\mathcal{G}}_\bu(\bv)-
		{\mathcal{K}}_{\bu}(\bv)+ \mathcal{S}_h(Q_h\bu,\bv),\\
		{\mathcal{G}}_\bu(\bv)&= \mu\sum_{T\in{\cal T}_h} \langle
		\bv_0-\bv_{b},(\nabla\bu-{\bm \Pi}_h\nabla\bu
		)\bn\rangle_{\partial T},\label{l1}\\
		{\mathcal{K}}_{\bu}(\bv)&=\mu\sum_{T\in{\cal T}_h}(\Delta \bu,\bv_{0}-\bRT(\bv))_{T}.\label{theta}
	\end{align}
\end{lemma}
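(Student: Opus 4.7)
The plan is to start from the tautology
\[
\mathcal{A}^s_h(\be_h,\bv) = \mathcal{A}^s_h(Q_h\bu,\bv) - \mathcal{A}^s_h(\bu_h,\bv) = \mathcal{A}^s_h(Q_h\bu,\bv) - (\bbf,\bRT(\bv)),
\]
where the second equality uses the WG scheme (\ref{WGA_primal}) for $\bv\in V_h^0$. Writing $\mathcal{A}^s_h(Q_h\bu,\bv)=\mathcal{A}_h(Q_h\bu,\bv)+\mathcal{S}_h(Q_h\bu,\bv)$, the stabilizer piece carries through unchanged and will appear in the final answer. The task then reduces to processing $\mathcal{A}_h(Q_h\bu,\bv)$ and $(\bbf,\bRT(\bv))$ separately and matching them term by term.

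For $\mathcal{A}_h(Q_h\bu,\bv)$, I would apply Lemma \ref{ProjectionSwap} to rewrite $\nabla_w(Q_h\bu)={\bm\Pi}_h\nabla\bu$ and $\nabla_w\cdot(Q_h\bu)=\mathcal{P}_h(\nabla\cdot\bu)$, and use $L^2$-orthogonality to drop both projections when paired against $\nabla_w\bv\in[P_{k-1}(T)]^{d\times d}$ and $\nabla_w\cdot\bv\in P_k(T)$. Next, I would unfold the weak-operator definitions (\ref{weakgradient})--(\ref{weakdiv}) on $\bv$, integrate by parts elementwise, and add/subtract $\bv_b$ inside boundary traces. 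The gradient contribution then becomes $-\mu\sum_T(\Delta\bu,\bv_0)_T+\mathcal{G}_\bu(\bv)$, where the leftover interior-edge term of the form $\langle\bv_b,\nabla\bu\cdot\bn\rangle$ cancels by the single-valuedness of $\nabla\bu\cdot\bn$ together with $\bv_b=\mathbf{0}$ on $\Gamma$. I would keep the divergence contribution $(\lambda+\mu)\sum_T(\nabla_w\cdot Q_h\bu,\nabla_w\cdot\bv)_T$ untouched, to be matched on the other side.

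For $(\bbf,\bRT(\bv))$, I would substitute the strong form $\bbf=-\mu\Delta\bu-(\lambda+\mu)\nabla(\nabla\cdot\bu)$. Splitting $\bRT(\bv)=\bv_0+(\bRT(\bv)-\bv_0)$ turns the $-\mu\Delta\bu$ pairing into $-\mu\sum_T(\Delta\bu,\bv_0)_T+\mathcal{K}_\bu(\bv)$. For the grad-div term, I would integrate by parts elementwise and invoke the key identity $\nabla\cdot\bRT(\bv)=\nabla_w\cdot\bv$ from (\ref{RTProp}); the boundary contributions drop out because $\bRT(\bv)\in H(div;\Omega)$ has a single-valued normal flux across interior edges and $\bRT(\bv)\cdot\bn=\bv_b\cdot\bn=0$ on $\Gamma$. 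One more use of $L^2$-orthogonality via $\mathcal{P}_h$ puts this contribution in the form $(\lambda+\mu)\sum_T(\nabla_w\cdot Q_h\bu,\nabla_w\cdot\bv)_T$, matching exactly the divergence term from $\mathcal{A}_h(Q_h\bu,\bv)$.

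Subtracting the two expressions, the grad-div terms and the two $-\mu\sum_T(\Delta\bu,\bv_0)_T$ contributions cancel, leaving $\mathcal{G}_\bu(\bv)-\mathcal{K}_\bu(\bv)+\mathcal{S}_h(Q_h\bu,\bv)=\Theta_\bu(\bv)$, as required. The main obstacle is purely bookkeeping: simultaneously managing the weak-operator definitions, the $L^2$-projection orthogonalities, and the integration-by-parts boundary traces, and verifying that every interior-edge term cancels for the right reason. The crucial structural ingredients are Lemma \ref{ProjectionSwap} and the $H(div)$-conformity of $\bRT$; together they ensure that the grad-div pieces of the two sides match without any leftover penalty-like residual, which is exactly the mechanism that will eventually yield the locking-free estimates.
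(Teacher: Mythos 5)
Your proposal is correct and follows essentially the same route as the paper's proof: swap projections via Lemma \ref{ProjectionSwap}, unfold the weak operators and integrate by parts for the $\mu$-term, test the strong equation with $\bRT(\bv)$, and match the grad--div contributions through $\nabla\cdot\bRT(\bv)=\nabla_w\cdot\bv$ together with the single-valued normal flux of $\bRT(\bv)$ and of $(\nabla\bu)\bn$. The only cosmetic caution is that the definitions (\ref{weakgradient})--(\ref{weakdiv}) require polynomial test data, so one should keep ${\bm \Pi}_h\nabla\bu$ (resp. ${\cal P}_h(\nabla\cdot\bu)$) in place when unfolding $\nabla_w\bv$ (resp. $\nabla_w\cdot\bv$) and remove the projection only afterwards by orthogonality, which is precisely why the boundary trace in ${\mathcal{G}}_\bu(\bv)$ carries $\nabla\bu-{\bm \Pi}_h\nabla\bu$.
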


\begin{proof}	
	According to the property of ${\bm \Pi}_h$ in (\ref{projectiongrad}), the definition of weak gradient  (\ref{weakgradient}), and the integration by parts, we obtain
	\begin{equation}\label{weakgradientQh}
		\begin{split}
			&\mu\sum_{T\in {\cal T}_h} (\nabla_w(Q_h\bu), \nabla_w\bv)_T\\
			=&\mu\sum_{T\in {\cal T}_h} ( {\bm \Pi}_h \nabla\bu,\nabla_w\bv)_T\\
			=&  -\mu\sum_{T\in {\cal T}_h}(\bv_0, \nabla\cdot ({\bm \Pi}_h \nabla\bu
			))_T+\mu\sum_{T\in {\cal T}_h}\langle \bv_{b}, ({\bm \Pi}_h \nabla
			\bu)\bn\rangle_{\partial T}\\
			=&\mu\sum_{T\in {\cal T}_h}  ( \nabla \bv_0, {\bm \Pi}_h\nabla\bu)_T-\mu\sum_{T\in {\cal T}_h}\langle
			\bv_0 - \bv_{b}, ({\bm \Pi}_h \nabla \bu)\bn\rangle_{\partial T}\\
			=& \mu\sum_{T\in {\cal T}_h} (\nabla\bv_0, \nabla\bu) _T-\mu\sum_{T\in {\cal T}_h}\langle \bv_0 - \bv_{b}
			,({\bm \Pi}_h \nabla \bu) \bn\rangle_{\partial T}.
		\end{split}
	\end{equation}
	Next, from (\ref{projectiondiv}), the property of reconstruction operator $\bRT$ in (\ref{RTProp}), $\nabla\cdot\bu$ is continuous, and  the normal component of $\bRT(\bv)$ is continuous, we can deduce the following result:
	\begin{equation}\label{EQ:weakdivQh}
		\begin{split}
			&\sum_{T\in {\cal T}_h} (\nabla_w\cdot(Q_h\bu), \nabla_w\cdot\bv)_T\\
			=&\sum_{T\in {\cal T}_h} ({\cal P}_h (\nabla\cdot\bu),\nabla_w\cdot\bv)_T\\
			=&\sum_{T\in {\cal T}_h}({\cal P}_h (\nabla\cdot\bu),\nabla\cdot\bRT(\bv))_T\\
			=&\sum_{T\in {\cal T}_h}(\nabla\cdot\bu,\nabla\cdot\bRT(\bv))_T\\
			=&\sum_{T\in {\cal T}_h}(\nabla\cdot\bu,\nabla\cdot\bRT(\bv))_T-\sum_{T\in {\cal T}_h}\langle\nabla\cdot\bu,\bRT(\bv)\cdot \bn\rangle_{\pT}\\
			=&-(\nabla(\nabla\cdot\bu),\bRT(\bv)).
		\end{split}
	\end{equation}
	
	Testing (\ref{primal_model}) by $\bRT(\bv)$ to get
	\begin{equation}\label{EQ:f_RTv}
		\begin{split}
			&(\bbf,\bRT(\bv))\\
			=&-(\mu\Delta\bu,\bRT(\bv))-(\lambda+\mu)(\nabla(\nabla\cdot\bu),\bRT(\bv))\\
			=&-(\mu\Delta\bu,\bv_{0})+(\mu\Delta\bu,\bv_{0}-\bRT(\bv))-(\lambda+\mu)(\nabla(\nabla\cdot\bu),\bRT(\bv)).
		\end{split}
	\end{equation}
	Note that 
	\begin{equation}\label{EQ:la_v0}
		-\mu(\Delta\bu,\bv_{0})
		=\mu\sum_{T\in{\cal T}_h}(\nabla\bu,\nabla\bv_{0})_T-\mu\sum_{T\in{\cal T}_h}\langle\bv_{0},(\nabla\bu)\bn\rangle_\pT.
	\end{equation}
	By substituting (\ref{weakgradientQh}), (\ref{EQ:weakdivQh}), and (\ref{EQ:la_v0}) into (\ref{EQ:f_RTv}), we have
	\begin{equation*}\label{5.7}
		\begin{split}
			&\mu \sum_{T\in{\cal T}_h} \left((\nabla_w(Q_h\bu), \nabla_w\bv)_T+
			\langle \bv_0 - \bv_{b},({\bm \Pi}_h \nabla \bu)\bn\rangle_{\partial T}-
			\langle\bv_0,(\nabla\bu) \bn \rangle_{\partial T}\right)\\
			&+(\lambda+\mu)\sum_{T\in {\cal T}_h} (\nabla_w\cdot(Q_h\bu), \nabla_w\cdot\bv)_T+(\mu\Delta\bu,\bv_{0}-\bRT(\bv))=(\bbf,\bRT(\bv)).
		\end{split}
	\end{equation*}
	With the fact that
	$
	\sum_{T\in {\cal T}_h}\langle
	\bv_b,(\nabla\bu) \bn\rangle_{\partial T}=0
	$, we get
	\begin{equation*}
		\begin{split}
			&\mu\sum_{T\in{\cal T}_h}(\nabla_w(Q_h\bu), \nabla_w\bv)_T+(\lambda+\mu)\sum_{T\in{\cal T}_h}(\nabla_w\cdot(Q_h\bu), \nabla_w\cdot\bv)_T\\
			=& (\bbf,\bRT(\bv))-\mu\sum_{T\in {\cal T}_h}(\Delta\bu,\bv_{0}-\bRT(\bv))_T\\
			&+\mu\sum_{T\in {\cal T}_h}\langle(\nabla\bu-{\bm \Pi}_h\nabla\bu)\bn,\bv_{0}-\bv_{b}\rangle_\pT,
		\end{split}
	\end{equation*}
	which implies that
	\begin{equation}\label{ereq1}
		\mathcal{A}^s_h(Q_h\textbf{u},\bv)=(\textbf{f},\bRT(\bv))+{\mathcal{G}}_\textbf{u}(\bv)-
		{\mathcal{K}}_{\bu}(\bv)+\mathcal{S}_h(Q_h\textbf{u},\bv).
	\end{equation}
	Subtracting (\ref{WGA_primal}) from (\ref{ereq1}), we  obtain (\ref{ErrorEquation}).	 
\end{proof}

\subsection{Error Estimate in the $H^1$-Norm}
In this section, we present the error estimate in the $H^1$-norm.

\begin{lemma}\label{lemmaA1} \cite{wy1202}  Let ${\cal T}_h$ be a shape regular partition  \cite{wy1202}. Let $\bw\in [H^{l+1} (\Omega)]^ d$,  $1\le l\le k$, and $0 \leq s \leq 1$. There holds
	\begin{align}\label{A1}
		\sum_{T\in{\cal
				T}_h}h_T^{2s}\|\bw-Q_0\bw\|^2_{T,s}&\leq  C
		h^{2(l+1)}\|\bw\|^2_{l+1},\\
		\sum_{T\in{\cal T}_h}h_T^{2s}\|
		\nabla\bw-{\bm \Pi}_h\nabla\bw\|^2_{T,s}&\leq
		Ch^{2l}\|\bw\|^2_{l+1},\label{A2}\\
		\sum_{T\in{\cal T}_h}h_T^{2s}\|\nabla\cdot \bw-{\cal P}_h\nabla\cdot \bw\|^2_{T,s}&\leq
		Ch^{2l}\|\nabla\cdot \bw\|^2_{l}\leq
		Ch^{2l}\|\bw\|^2_{l+1}.\label{A3}
	\end{align}
\end{lemma}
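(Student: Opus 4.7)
The plan is to establish all three estimates by the standard reference-element/scaling argument combined with the Bramble--Hilbert lemma, exactly as in the cited reference. I would first reduce each of the three inequalities to a local (element-wise) estimate, then square-sum over $T\in{\cal T}_h$ to obtain the global bound. The fractional parameter $s\in[0,1]$ is handled by treating the two endpoints $s=0$ and $s=1$ separately and interpolating, so the real work is only for those two cases.

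For (\ref{A1}), fix $T\in{\cal T}_h$ and map it to a reference simplex $\hat T$ via an affine transformation $F_T$. Shape regularity gives the usual two-sided bounds on $|\det DF_T|$ and $\|DF_T\|,\|DF_T^{-1}\|$ in terms of $h_T$. The operator $Q_0$ reproduces polynomials in $[P_k(T)]^d$, so on the reference element $I-\hat Q_0$ annihilates $[P_k(\hat T)]^d$; the Bramble--Hilbert lemma then yields
\begin{equation*}
\|\hat\bw-\hat Q_0\hat\bw\|_{\hat T,s}\le C\,|\hat\bw|_{l+1,\hat T},\qquad s\in\{0,1\},\ 1\le l\le k.
\end{equation*}
Scaling back to $T$ produces a factor $h_T^{(d/2)-s}$ on the left and $h_T^{l+1-d/2}$ on the right, whence $h_T^{2s}\|\bw-Q_0\bw\|_{T,s}^2\le Ch_T^{2(l+1)}|\bw|_{l+1,T}^2$. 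Summing over $T$ gives (\ref{A1}) for $s=0$ and $s=1$, and the intermediate $s\in(0,1)$ follows by Sobolev-space interpolation on each $T$ (or by a direct Bramble--Hilbert argument in the fractional norm).

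The proofs of (\ref{A2}) and (\ref{A3}) are entirely analogous once one observes the key polynomial-preservation property: if $\bw\in[P_k(T)]^d$ then $\nabla\bw\in[P_{k-1}(T)]^{d\times d}$, so ${\bm\Pi}_h(\nabla\bw)=\nabla\bw$; likewise $\nabla\cdot\bw\in P_{k-1}(T)\subset P_k(T)$ and ${\cal P}_h(\nabla\cdot\bw)=\nabla\cdot\bw$. Thus the operators $I-{\bm\Pi}_h$ and $I-{\cal P}_h$ applied to $\nabla\bw$ and $\nabla\cdot\bw$, respectively, vanish on polynomial displacements of degree at most $k$, which is exactly the degree needed for Bramble--Hilbert to produce the factor $h^{2l}$ (rather than $h^{2(l+1)}$) under the regularity $\bw\in[H^{l+1}(\Omega)]^d$. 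The second inequality in (\ref{A3}) is the trivial bound $\|\nabla\cdot\bw\|_l\le\sqrt d\,\|\bw\|_{l+1}$.

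The only step that requires any care is the fractional-$s$ case, where one must invoke either a space-interpolation result between $H^0$ and $H^1$ on each $T$ with a constant independent of $h_T$ (again a consequence of shape regularity and the affine scaling), or reprove Bramble--Hilbert in the $|\cdot|_{s,\hat T}$ norm. Since the statement is cited from \cite{wy1202}, I would simply quote the reference for this piece rather than reprove it in detail.
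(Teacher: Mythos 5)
Your argument is correct: the paper itself gives no proof of this lemma, simply citing \cite{wy1202}, and your affine-scaling plus Bramble--Hilbert reduction (with the polynomial-reproduction observations for $Q_0$, ${\bm \Pi}_h$, ${\cal P}_h$ and endpoint-plus-interpolation treatment of $s\in[0,1]$) is exactly the standard route used in that reference. Nothing is missing beyond the fractional-$s$ interpolation detail, which you appropriately defer to the cited source.
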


\begin{lemma}\label{LemProEstimate} For any $\bw\in [H^{l+1} (\Omega)]^d$, $1\le l\le k$, and $\bv \in V_h$, the following estimates hold true
	\begin{align}\label{A7}
		|\mathcal{S}_h(Q_h\bw,\bv)|\leq &
		Ch^l\|\bw\|_{l+1}\3bar\bv\3bar,\\
		|{\mathcal{G}}_\bw(\bv)|\leq&
		Ch^l\|\bw\|_{l+1}\3bar\bv\3bar,\label{A8}\\
		|{\mathcal{K}}_{\bw}(\bv)|\leq &
		Ch^l\|\bw\|_{l+1}\3bar\bv\3bar.\label{A9}
	\end{align}
\end{lemma}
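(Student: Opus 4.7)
The three estimates share a common flavor: each bilinear form can be bounded by Cauchy--Schwarz into a ``data'' factor involving $\bw$ and a ``test'' factor that is already controlled by $\3bar\bv\3bar$. My plan is therefore to identify the correct approximation factor in each piece and apply the projection estimates from Lemma \ref{lemmaA1}, together with the trace inequality $\|\varphi\|_{\partial T}^2\le C(h_T^{-1}\|\varphi\|_T^2+h_T\|\nabla\varphi\|_T^2)$ on each element.

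For $\mathcal{S}_h(Q_h\bw,\bv)$, apply Cauchy--Schwarz on $\partial T$ so that the factor $\left(\sum_T h_T^{-1}\|\bv_0-\bv_b\|_{\partial T}^2\right)^{1/2}$ is immediately dominated by $\3bar\bv\3bar$ from (\ref{3barnorm}). The other factor is $\sum_T h_T^{-1}\|Q_0\bw-Q_b\bw\|_{\partial T}^2$, which I would control by the key observation that $Q_0\bw|_e\in [P_k(e)]^d$ implies $Q_b(Q_0\bw)=Q_0\bw$ on $e$, so that $Q_0\bw-Q_b\bw=Q_b(Q_0\bw-\bw)$ on $e$ and hence $\|Q_0\bw-Q_b\bw\|_e\le \|Q_0\bw-\bw\|_e$. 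A trace inequality together with (\ref{A1}) then yields an $O(h^l\|\bw\|_{l+1})$ bound. The term $\mathcal{G}_\bw(\bv)$ is handled the same way: Cauchy--Schwarz on $\partial T$, absorb $h_T^{-1/2}\|\bv_0-\bv_b\|_{\partial T}$ into $\3bar\bv\3bar$, and bound $\sum_T h_T\|\nabla\bw-{\bm\Pi}_h\nabla\bw\|_{\partial T}^2$ by a trace inequality combined with (\ref{A2}).

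For $\mathcal{K}_\bw(\bv)$ the new ingredient is the orthogonality relation (\ref{EQ:RT1}), which tells us that $\bv_0-\bRT(\bv)$ is $L^2$-orthogonal to $[P_{k-1}(T)]^d$ on each $T$. I would exploit this by inserting an arbitrary ${\bm\phi}\in[P_{k-1}(T)]^d$ and writing
\begin{equation*}
(\Delta\bw,\bv_0-\bRT(\bv))_T=(\Delta\bw-{\bm\phi},\bv_0-\bRT(\bv))_T.
\end{equation*}
Choosing ${\bm\phi}$ as the best polynomial approximation of $\Delta\bw$ of degree $\le k-1$ gives $\|\Delta\bw-{\bm\phi}\|_T\le Ch_T^{l-1}\|\bw\|_{l+1,T}$ (since $l\le k$ ensures the required polynomial degree is available). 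Then Cauchy--Schwarz combined with the reconstruction estimate (\ref{RTEs}), which supplies $\|\bv_0-\bRT(\bv)\|_T\lesssim h\3bar\bv\3bar$, produces exactly the missing factor of $h$ and delivers the $O(h^l\|\bw\|_{l+1})$ bound.

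The main obstacle I anticipate is the estimate for $\mathcal{K}_\bw(\bv)$: without the orthogonality built into the reconstruction operator, a direct Cauchy--Schwarz would only give an $O(h^{l+1}\|\bw\|_{l+1})$--type bound against $\|\Delta\bw\|_T$, but with a $\lambda$-free dependence that I must not compromise, and combined with the $h$ from (\ref{RTEs}) this would lose a power of $h$. Using the $[P_{k-1}(T)]^d$-orthogonality to subtract a polynomial from $\Delta\bw$ is what recovers the correct order. The bounds for $\mathcal{S}_h$ and $\mathcal{G}_\bw$ are standard WG approximation estimates and should follow without difficulty.
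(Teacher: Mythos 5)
Your proposal is correct and follows essentially the same route as the paper: Cauchy--Schwarz plus trace inequality and Lemma \ref{lemmaA1} for $\mathcal{S}_h$ and $\mathcal{G}_\bw$, and the $[P_{k-1}(T)]^d$-orthogonality of $\bv_0-\bRT(\bv)$ from (\ref{EQ:RT1}) (the paper subtracts ${\cal P}^{k-1}_h\Delta\bw$, which is exactly your best-approximation choice) combined with (\ref{RTEs}) for $\mathcal{K}_\bw$. Your reduction $Q_0\bw-Q_b\bw=Q_b(Q_0\bw-\bw)$ is only a cosmetic variant of the paper's triangle-inequality step, so no substantive difference remains.
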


\begin{proof}
	According to  (\ref{EQ:stabilizer}), the
	Cauchy-Schwarz inequality,  the triangle inequality, the trace inequality, and the estimate (\ref{A1}), one has
	\begin{equation*}
		\begin{split}
			|\mathcal{S}_h(Q_h\bw,\bv)|=&\left|\sum_{T\in {\cal
					T}_h}h_T^{-1}\langle Q_0\bw-Q_b\bw, \bv_0-
			\bv_{b}\rangle_{\partial
				T}\right|\\
			\leq & \left (\sum_{T\in {\cal T}_h}h_T^{-1}\|Q_0\bw-Q_b
			\bw\|_{\partial T}^2\right ) ^{\frac{1}{2}}\left (\sum_{T\in {\cal
					T}_h}h_T^{-1}\|\bv_0- \bv_{b}\|_{\partial
				T}^2\right ) ^{\frac{1}{2}}\\
			\leq & C \left (\sum_{T\in {\cal T}_h}h_T^{-1}(\|Q_0\bw-
			\bw\|_{\partial T}^2+\|Q_b\bw-
			\bw\|_{\partial T}^2)\right ) ^{\frac{1}{2}}\3bar \bv \3bar\\
			\leq & C \left (\sum_{T\in {\cal T}_h}h_T^{-1}\|Q_0\bw-
			\bw\|_{\partial T}^2\right ) ^{\frac{1}{2}}\3bar \bv \3bar\\
			\leq & C\left (\sum_{T\in {\cal T}_h}h_T^{-2}\|Q_0\bw- \bw\|_{T}^2 +
			\|\nabla(Q_0\bw- \bw)\|_{T}^2\right ) ^{\frac{1}{2}} \3bar \bv \3bar\\
			\leq & Ch^l\|\bw\|_{l+1}\3bar \bv \3bar.
		\end{split}
	\end{equation*}
	Now, we  prove (\ref{A8}). From the Cauchy-Schwarz inequality, the
	trace inequality, and the estimate (\ref{A2}), we obtain
	\begin{equation}\label{korn}
		\begin{split}
			|{\mathcal{G}}_\bw(\bv)|=&\left|\mu \sum_{T\in{\cal
					T}_h}\langle\bv_0-\bv_{b},( \nabla\bw-{\bm \Pi}_h\nabla
			\bw)\bn\rangle_{\partial T}\right|\\
			\leq & C \left (\sum_{T\in{\cal T}_h}h_T^{-1}\|
			\bv_0-\bv_{b}\|^2_{\partial T}\right)^{ \frac{1}{2}}
			\left (\sum_{T\in{\cal T}_h}h_T \|
			\nabla\bw-{\bm \Pi}_h\nabla\bw\|^2_{\partial
				T}\right ) ^{ \frac{1}{2}}\\
			\leq & C h^l\|\bw\|_{l+1}\3bar \bv\3bar.
		\end{split}
	\end{equation}
	Next, according to the definition of ${\cal P}^{k-1}_h$ and the definition of $\bRT(\bv)$ in (\ref{EQ:RT1}), we have
	\begin{equation*}
		({\cal P}^{k-1}_h(\Delta \bw),\bv_0-\bRT(\bv))_T
		=({\cal P}^{k-1}_h(\Delta \bw),\bv_0)_T-({\cal P}^{k-1}_h(\Delta \bw),\bv_0)_T
		=0.
	\end{equation*}
	Thus, it follows from the Cauchy-Schwarz inequality, and the
	estimates (\ref{RTEs}), (\ref{A3}) that
	\begin{equation*}
		\begin{split}
			|{\mathcal{K}}_{\bw}(\bv)|&=\left|\mu\sum_{T\in{\cal T}_h}(\Delta \bw,\bv_{0}-\bRT(\bv))_{T}\right|\\
			&=\left|\mu\sum_{T\in{\cal T}_h}(\Delta \bw-{\cal P}^{k-1}_h(\Delta \bw),\bv_{0}-\bRT(\bv))_{T}\right|\\
			&\leq C \left (\sum_{T\in{\cal T}_h}\|
			\Delta \bw-{\cal P}^{k-1}_h(\Delta \bw)\|^2_{T}\right ) ^{ \frac{1}{2}}
			\left (\sum_{T\in{\cal T}_h} \|  (\bv_{0}-\bRT(\bv) \|^2_{
				T}\right ) ^{ \frac{1}{2}}\\
			&\leq Ch^{l-1}\| \bw\|_{l+1}h\3bar\bv\3bar\\
			&\leq  C h^l\| \bw\|_{l+1}\3bar\bv\3bar.
		\end{split}
	\end{equation*}
	This completes the proof of the lemma.	 
\end{proof}

\begin{theorem}\label{theorem1}
	Let $\bu_h \in V_h $ be the discrete solution arising from the WG scheme (\ref{WGA_primal}) and $\bu\in
	[H^{k+1}(\Omega)]^d$ be the exact solution of
	(\ref{primal_model}). Then, there holds 
	\begin{equation}\label{th1}
		\3barQ_h\bu-\bu_h\3bar \leq
		Ch^k\|\bu\|_{k+1},
	\end{equation}
	where the constant $C>0$ is independent of $\lambda$ or the mesh size $h$.	
\end{theorem}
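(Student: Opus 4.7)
The proof reduces to combining three ingredients already in hand: the coercivity of $\mathcal{A}^s_h$ on $V_h^0$ (Lemma \ref{lemmacoerv}), the error equation $\mathcal{A}^s_h(\be_h,\bv)=\Theta_{\bu}(\bv)$ for all $\bv\in V_h^0$ (Lemma \ref{ErEq}), and the three component-wise bounds in Lemma \ref{LemProEstimate}. The plan is a standard energy argument: test the error equation against the error itself, dominate the left-hand side by the triple-bar norm via coercivity, then control each piece of $\Theta_{\bu}$ by the approximation estimates.

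First I would observe that $\be_h = Q_h\bu - \bu_h$ indeed lies in $V_h^0$, since $\be_b = Q_b\bu - \bu_b = Q_b\bu - Q_b\widehat{\bg} = \mathbf{0}$ on $\Gamma$. Therefore $\bv = \be_h$ is admissible in both Lemma \ref{lemmacoerv} and Lemma \ref{ErEq}, yielding
\[
\delta\,\3bar\be_h\3bar^{2}\;\le\;\mathcal{A}^s_h(\be_h,\be_h)\;=\;\Theta_{\bu}(\be_h)\;=\;\mathcal{G}_{\bu}(\be_h)-\mathcal{K}_{\bu}(\be_h)+\mathcal{S}_h(Q_h\bu,\be_h).
\]
Next I would apply Lemma \ref{LemProEstimate} with $l=k$ (permitted since $\bu\in[H^{k+1}(\Omega)]^d$) to each of the three terms on the right, obtaining by the triangle inequality
\[
|\Theta_{\bu}(\be_h)|\;\le\;C\,h^{k}\|\bu\|_{k+1}\,\3bar\be_h\3bar.
\]
Combining the two displays and dividing by $\3bar\be_h\3bar$ (the case $\be_h=0$ being trivial) gives the desired bound.

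The only point demanding real care is verifying that the constant is genuinely independent of $\lambda$. On the coercivity side, I would take $\delta = \min(\mu_1,1)$ by simply discarding the non-negative contribution $(\lambda+\mu)\sum_{T}\|\nabla_w\cdot\bv\|_T^{2}$ from $\mathcal{A}^s_h(\bv,\bv)$. On the right-hand side, a direct inspection of Lemma \ref{LemProEstimate} shows that none of the three estimates carries a factor of $\lambda$; in particular $\mathcal{K}_{\bu}$ is built from $\mu\Delta\bu$ rather than $(\lambda+\mu)\nabla(\nabla\cdot\bu)$. The structural reason this is possible is already embedded in the derivation of the error equation: identity \eqref{EQ:weakdivQh}, which relies on $\nabla\cdot\bRT(\bv)=\nabla_w\cdot\bv$ and the $H(\mathrm{div})$-continuity of $\bRT(\bv)$, reproduces $-(\nabla(\nabla\cdot\bu),\bRT(\bv))$ \emph{exactly}, so the $(\lambda+\mu)$-weighted part of the consistency error cancels against the corresponding piece arising from testing \eqref{primal_model} against $\bRT(\bv)$. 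This cancellation is the whole point of introducing the $H(\mathrm{div})$-conforming reconstruction operator, and it is what makes the final bound locking-free.
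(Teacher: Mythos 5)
Your proposal is correct and follows essentially the same route as the paper's own proof: test the error equation with $\bv=\be_h$, bound the left-hand side below via the coercivity of $\mathcal{A}^s_h$ and the right-hand side above via the three estimates of Lemma \ref{LemProEstimate} with $l=k$, then cancel one factor of $\3bar\be_h\3bar$. Your added remarks on why the constant is $\lambda$-independent (taking $\delta=\min(\mu_1,1)$ after dropping the nonnegative divergence term, and noting that $\Theta_{\bu}$ carries no $\lambda$ thanks to the exact reproduction of the grad-div term by $\bRT$) are accurate and simply make explicit what the paper leaves implicit.
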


\begin{proof} By taking $\bv=\be_h$ in
	(\ref{ErrorEquation}), we get
	$$
	\mathcal{A}^s_h(\be_h,\be_h)=\Theta_{\textbf{u}}(\be_h).
	$$
	Using Lemma \ref{LemProEstimate} and Lemma \ref{lemmacoerv}, we obtain
	\begin{eqnarray}\label{eh}
		\delta  \3bar\be_h\3bar^2\leq \mathcal{A}^s_h(\be_h,\be_h) \leq
		Ch^k\|\textbf{u}\|_{k+1}\3bar \be_h\3bar,
	\end{eqnarray}
	which gives (\ref{th1}).
\end{proof}

According to the results presented  in Theorem \ref{theorem1}, it is evident that the displacement error, when measured in the $H^1$-norm, demonstrates independence of the $Lam\acute{e}$ constant $\lambda$. This implies that the WG Algorithm \ref{algo-primal}  is robust about the $Lam\acute{e}$ constant $\lambda$.

\subsection{ Error Estimate in the $L^2$-Norm} In this section, we establish the error estimate in the $L^2$-norm.

Consider the dual problem of seeking
$\bPhi$ satisfying
\begin{eqnarray}\label{dualityEq}
	-\mu \Delta \bPhi -(\lambda+\mu)\nabla(\nabla\cdot \bPhi)&=&\be_0,\qquad \text{in} \ \Omega,\\
	\bPhi&=&\textbf{0}, \ \qquad\text{on} \ \Gamma. \label{bdy}
\end{eqnarray}

Assume that the dual problem (\ref{dualityEq})-(\ref{bdy}) satisfies the following regularity estimate:
\begin{equation}\label{dualregularity}
	\mu\|\bPhi\|_{2}+(\lambda+\mu) \|\nabla\cdot \bPhi\|_{1}\leq C\|\be_0 \|.
\end{equation}
According to \cite{b2008,BrennerSusanne}, it is apparent that the regularity assumption is reasonable.

\begin{theorem} \label{THL2}Let $\bu_h \in V_h $ be the solution of the WG scheme  (\ref{WGA_primal})  and $\bu \in
	[H^{k+1}(\Omega)]^d$ be the exact solution of
	(\ref{primal_model}), and $\bbf \in
	[H^{k}(\Omega)]^d$. There exists a constant $C>0$ independent of $\lambda$ or the mesh size $h$, such that
	\begin{equation}\label{Q0Norm}
		\|Q_0\bu-{\bu_0}\|\leq Ch^{k+1}\big(\|\bu\|_{k+1}+\|\bbf\|_k\big).
	\end{equation}
\end{theorem}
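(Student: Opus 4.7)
The plan is a standard Aubin--Nitsche duality argument. First, I would re-run the derivation of identity (\ref{ereq1}) from the proof of Lemma \ref{ErEq}, but now applied to the dual problem (\ref{dualityEq})--(\ref{bdy}) rather than the primal one. Testing (\ref{dualityEq}) against $\bRT(\bv)$ with $\bv\in V_h^0$, and using Lemma \ref{ProjectionSwap}, the relation $\nabla\cdot\bRT(\bv)=\nabla_w\cdot\bv$ from (\ref{RTProp}), and the cancellation $\sum_T\langle\bv_b,(\nabla\bPhi)\bn\rangle_{\partial T}=0$ (since $\nabla\bPhi$ is continuous and $\bv_b=\mathbf{0}$ on $\Gamma$), one obtains the dual counterpart of (\ref{ereq1}):
\begin{equation*}
\mathcal{A}^s_h(Q_h\bPhi, \bv) = (\be_0, \bRT(\bv)) + \mathcal{G}_{\bPhi}(\bv) - \mathcal{K}_{\bPhi}(\bv) + \mathcal{S}_h(Q_h\bPhi, \bv),\qquad \forall\,\bv\in V_h^0.
\end{equation*}

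Setting $\bv=\be_h$, using the symmetry of $\mathcal{A}^s_h$ and the primal error equation (\ref{ErrorEquation}) to replace $\mathcal{A}^s_h(Q_h\bPhi,\be_h)$ by $\Theta_{\bu}(Q_h\bPhi)$, gives
\begin{equation*}
(\be_0, \bRT(\be_h)) = \mathcal{G}_{\bu}(Q_h\bPhi) - \mathcal{K}_{\bu}(Q_h\bPhi) + \mathcal{S}_h(Q_h\bu, Q_h\bPhi) - \mathcal{G}_{\bPhi}(\be_h) + \mathcal{K}_{\bPhi}(\be_h) - \mathcal{S}_h(Q_h\bPhi, \be_h).
\end{equation*}
I would then split $\|\be_0\|^2=(\be_0,\bRT(\be_h))+(\be_0,\be_0-\bRT(\be_h))$; by Cauchy--Schwarz, (\ref{RTEs}) and the $H^1$ bound $\3bar\be_h\3bar\leq Ch^k\|\bu\|_{k+1}$ from Theorem \ref{theorem1}, the second piece is already of size $Ch^{k+1}\|\bu\|_{k+1}\|\be_0\|$, so the problem reduces to bounding the six cross terms by $Ch^{k+1}(\|\bu\|_{k+1}+\|\bbf\|_k)\|\be_0\|$, with the dual regularity $\mu\|\bPhi\|_2+(\lambda+\mu)\|\nabla\cdot\bPhi\|_1\leq C\|\be_0\|$ being used to absorb the $\bPhi$-norms.

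The ``$\bPhi$-tested'' terms $\mathcal{G}_{\bPhi}(\be_h),\mathcal{K}_{\bPhi}(\be_h),\mathcal{S}_h(Q_h\bPhi,\be_h)$ fall to Lemma \ref{LemProEstimate} with $l=1$ combined with Theorem \ref{theorem1}, each producing the desired factor $h\cdot h^k$. For $\mathcal{G}_\bu(Q_h\bPhi)$ and $\mathcal{S}_h(Q_h\bu,Q_h\bPhi)$, the extra power of $h$ beyond the $H^1$-analysis is gained by exploiting $\bPhi\in H^2$ through the sharper trace-projection estimate $\|Q_0\bPhi-Q_b\bPhi\|_{\partial T}\leq Ch_T^{3/2}\|\bPhi\|_{2,T}$ on the $\bPhi$-side of Cauchy--Schwarz, matched against the $\bu$-side bounds already used in the proofs of (\ref{A7})--(\ref{A8}).

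The hard part is $\mathcal{K}_\bu(Q_h\bPhi)=\mu(\Delta\bu,Q_0\bPhi-\bRT(Q_h\bPhi))$. A direct bound via $\|\Delta\bu\|_{k-1}\leq\|\bu\|_{k+1}$ would give the right power of $h$ but would hide possible $\lambda$-growth inside $\|\bu\|_{k+1}$, contradicting the introduction's claim that $\lambda\|\nabla\cdot\bu\|_k$ need not be bounded. To obtain the advertised form with $\|\bbf\|_k$, I would substitute the primal PDE $\mu\Delta\bu=-\bbf-(\lambda+\mu)\nabla(\nabla\cdot\bu)$ and exploit the orthogonality $Q_0\bPhi-\bRT(Q_h\bPhi)\perp[P_{k-1}(T)]^d$, which is immediate from (\ref{EQ:RT1}). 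The $\bbf$-piece pairs with $\|\bbf-\mathcal{P}^{k-1}_h\bbf\|\leq Ch^k\|\bbf\|_k$ and $\|Q_0\bPhi-\bRT(Q_h\bPhi)\|\leq Ch^2\|\bPhi\|_2$ to produce the $\|\bbf\|_k$ contribution; the $(\lambda+\mu)\nabla(\nabla\cdot\bu)$ piece is recast by element-wise integration by parts, whose boundary traces telescope thanks to $\bRT(Q_h\bPhi)\cdot\bn=Q_b\bPhi\cdot\bn$ on edges together with $Q_b\bPhi=0$ on $\Gamma$, and the surviving $(\lambda+\mu)$-weighted residual is absorbed by the second half of the dual regularity estimate.
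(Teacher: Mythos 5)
You correctly isolate $\mathcal{K}_{\bu}(Q_h\bPhi)=\mu\sum_{T}(\Delta\bu,\,Q_0\bPhi-\bRT(Q_h\bPhi))_T$ as the hard term, but your treatment of its $(\lambda+\mu)$-part does not close. After substituting $\mu\Delta\bu=-\bbf-(\lambda+\mu)\nabla(\nabla\cdot\bu)$ and integrating by parts element-wise, the boundary traces do \emph{not} fully telescope: only the $\bRT(Q_h\bPhi)\cdot\bn=Q_b\bPhi\cdot\bn$ part cancels across interior faces (with $Q_b\bPhi=\mathbf{0}$ on $\Gamma$), while the trace of $Q_0\bPhi$ is double-valued, so what survives is $(\lambda+\mu)\sum_T\langle\nabla\cdot\bu,(Q_0\bPhi-Q_b\bPhi)\cdot\bn\rangle_{\partial T}$ together with the volume term $-(\lambda+\mu)\sum_T(\nabla\cdot\bu,\,\nabla\cdot Q_0\bPhi-{\cal P}_h\nabla\cdot\bPhi)_T$. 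The piece $\nabla\cdot\bPhi-{\cal P}_h\nabla\cdot\bPhi$ is harmless, but the remaining $\bPhi$-factors, namely $\nabla\cdot(Q_0\bPhi-\bPhi)$ and $(Q_0\bPhi-Q_b\bPhi)\cdot\bn$, are controlled only by $\|\bPhi\|_2$, not by $\|\nabla\cdot\bPhi\|_1$, and they carry no orthogonality against element-wise polynomials, so you cannot subtract a projection of $\nabla\cdot\bu$ to gain $h^{k}$ on the $\bu$-side. Every available bound therefore has the shape $(\lambda+\mu)h^{s}\|\nabla\cdot\bu\|_{\ast}\|\bPhi\|_2$ (with $s$ too small in addition), while the regularity assumption (\ref{dualregularity}) only gives $\mu\|\bPhi\|_2\le C\|\be_0\|$; absorbing the weight would require either $\lambda\|\nabla\cdot\bu\|_k$ bounded (exactly what the theorem avoids) or a constant growing like $(\lambda+\mu)/\mu$. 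The same obstruction reappears if you instead use the $[P_{k-1}(T)]^d$-orthogonality of $Q_0\bPhi-\bRT(Q_h\bPhi)$ directly: that yields $(\lambda+\mu)h^{k+1}\|\nabla\cdot\bu\|_k\|\bPhi\|_2$, which is optimal in $h$ but not $\lambda$-robust.

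The paper's proof is arranged so that this term never arises: instead of inserting $Q_h\bPhi$ into the error equation, it tests the dual PDE with $\be_0$, rewrites the result via Lemma \ref{ProjectionSwap}, and then inserts the exact identity $\mu(\nabla\bu,\nabla\bPhi)+(\lambda+\mu)(\nabla\cdot\bu,\nabla\cdot\bPhi)=(\bbf,\bPhi)$ together with the scheme tested by $Q_h\bPhi$. In the resulting identity (\ref{e0st51}) every $(\lambda+\mu)$-weighted residual pairs $\nabla\cdot\bPhi-{\cal P}_h\nabla\cdot\bPhi$ with either $\be_0-\be_b$ or $\nabla\cdot\bu-{\cal P}_h\nabla\cdot\bu$, so that $(\lambda+\mu)\|\nabla\cdot\bPhi\|_1\le C\|\be_0\|$ applies, and the datum enters only through $(\bbf,\bPhi-\bRT(Q_h\bPhi))$. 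The remainder of your argument (the dual analogue of (\ref{ereq1}), symmetry of $\mathcal{A}^s_h$, the splitting $\|\be_0\|^2=(\be_0,\bRT(\be_h))+(\be_0,\be_0-\bRT(\be_h))$, and the bounds for $\mathcal{G}_{\bPhi}(\be_h)$, $\mathcal{K}_{\bPhi}(\be_h)$, $\mathcal{S}_h(Q_h\bPhi,\be_h)$, $\mathcal{G}_{\bu}(Q_h\bPhi)$, $\mathcal{S}_h(Q_h\bu,Q_h\bPhi)$) is sound, but the proof stands or falls with a $\lambda$-robust bound for $\mathcal{K}_{\bu}(Q_h\bPhi)$, which you have not supplied.
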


\begin{proof} 
	Testing (\ref{dualityEq}) by $\be_0$ and using (\ref{EQ:la_v0}), the fact that $
	\sum_{T\in {\cal T}_h}\langle
	\be_b,(\nabla\bPhi) \bn\rangle_{\partial T}=0
	$, and (\ref{weakgradientQh}), we get
	\begin{align}\label{e0st1}
		\begin{split}
			\|\be_0\|^2=&-\mu\sum_{T\in {\cal T}_h}(\Delta\bPhi,\be_0)_T-(\lambda+\mu)\sum_{T\in {\cal T}_h}(\nabla(\nabla\cdot\bPhi),\be_0)_T\\
			=&\mu\sum_{T\in{\cal T}_h}(\nabla\bPhi,\nabla\be_{0})_T-\mu\sum_{T\in{\cal T}_h}\langle\be_{0}-\be_b,(\nabla\bPhi)\bn\rangle_\pT\\
			&-(\lambda+\mu)\sum_{T\in {\cal T}_h}(\nabla(\nabla\cdot\bPhi),\be_0)_T\\
			=&\mu\sum_{T\in{\cal T}_h}(\nabla_w(Q_h\bPhi), \nabla_w\textbf{e}_h)_T-(\lambda+\mu)\sum_{T\in {\cal T}_h}(\nabla(\nabla\cdot\bPhi),\be_0)_T\\
			&-\mu\sum_{T\in{\cal T}_h}\langle \be_0 - \be_b,(\nabla\bPhi-{\bm \Pi}_h\nabla\bPhi)\bn\rangle_{\partial T}\\
			=&I_1+I_2-{\mathcal{G}}_{\bPhi}(\be_h).
		\end{split}
	\end{align}
	Using the definitions of $\textbf{e}_h$ and ${\bm \Pi}_h$, (\ref{projectiongrad}), and the fact that $\sum_{T\in{\cal T}_h}(\nabla\bPhi-{\bm \Pi}_h\nabla\bPhi, {\bm \Pi}_h\nabla\bu)_T=0$, we have
	\begin{align*}
		&\sum_{T\in{\cal T}_h}(\nabla_w(Q_h\bPhi), \nabla_w\textbf{e}_h)_T\\
		=&\sum_{T\in{\cal T}_h}({\bm \Pi}_h\nabla\bPhi, \nabla_w(Q_h\bu-\bu_h))_T\\
		=&\sum_{T\in{\cal T}_h}({\bm \Pi}_h\nabla\bPhi, {\bm \Pi}_h\nabla\bu-\nabla_w\bu_h)_T\\
		=&\sum_{T\in{\cal T}_h}({\bm \Pi}_h\nabla\bPhi, \nabla\bu-\nabla_w\bu_h)_T\\
		=&\sum_{T\in{\cal T}_h}({\bm \Pi}_h\nabla\bPhi-\nabla\bPhi, \nabla\bu-\nabla_w\bu_h)_T+\sum_{T\in{\cal T}_h}(\nabla\bPhi, \nabla\bu-\nabla_w\bu_h)_T\\
		=&\sum_{T\in{\cal T}_h}({\bm \Pi}_h\nabla\bPhi-\nabla\bPhi, \nabla\bu-\nabla_w\bu_h)_T+\sum_{T\in{\cal T}_h}(\nabla\bPhi, \nabla\bu)_T\\
		&-\sum_{T\in{\cal T}_h}(\nabla_w(Q_h\bPhi), \nabla_w\bu_h)_T-\sum_{T\in{\cal T}_h}(\nabla\bPhi-{\bm \Pi}_h\nabla\bPhi, \nabla_w\bu_h)_T\\
		=&\sum_{T\in{\cal T}_h}({\bm \Pi}_h\nabla\bPhi-\nabla\bPhi, \nabla\bu)_T+\sum_{T\in{\cal T}_h}(\nabla\bPhi, \nabla\bu)_T\\
		&-\sum_{T\in{\cal T}_h}(\nabla_w(Q_h\bPhi), \nabla_w\bu_h)_T
		+\sum_{T\in{\cal T}_h}(\nabla\bPhi-{\bm \Pi}_h\nabla\bPhi, {\bm \Pi}_h\nabla\bu)_T\\
		=&\sum_{T\in{\cal T}_h}(\nabla\bPhi, \nabla\bu)_T-\sum_{T\in{\cal T}_h}(\nabla_w(Q_h\bPhi), \nabla_w\bu_h)_T\\
		&-\sum_{T\in{\cal T}_h}(\nabla\bPhi-{\bm \Pi}_h\nabla\bPhi, \nabla\bu-{\bm \Pi}_h\nabla\bu)_T.
	\end{align*}
	Thus, we arrive at
	\begin{align}\label{e0st2}
		\begin{split}
			I_1=&\mu\sum_{T\in{\cal T}_h}(\nabla_w(Q_h\bPhi), \nabla_w\textbf{e}_h)_T\\
			=&\sum_{T\in{\cal T}_h}\mu(\nabla\bPhi, \nabla\bu)_T-\sum_{T\in{\cal T}_h}\mu(\nabla_w(Q_h\bPhi), \nabla_w\bu_h)_T\\
			&-\sum_{T\in{\cal T}_h}\mu(\nabla\bPhi-{\bm \Pi}_h\nabla\bPhi, \nabla\bu-{\bm \Pi}_h\nabla\bu)_T.
		\end{split}
	\end{align}
	According to the integration by parts, the definition of ${\cal P}_h$, (\ref{weakdiv}), and the fact that $\sum_{T\in{\cal T}_h}\langle\be_{b},(\nabla\cdot\bPhi)\bn\rangle_\pT = 0$, we obtain
	\begin{align}
		&-\sum_{T\in {\cal T}_h}(\nabla(\nabla\cdot\bPhi),\be_0)_T \nonumber\\
		=&\sum_{T\in {\cal T}_h}(\nabla\cdot\bPhi,\nabla\cdot\be_0)_T-\sum_{T\in {\cal T}_h}\l(\nabla\cdot\bPhi)\bn,\be_0\r_{\partial T} \nonumber\\
		=&\sum_{T\in {\cal T}_h}({\cal P}_h\nabla\cdot\bPhi,\nabla\cdot\be_0)_T-\sum_{T\in {\cal T}_h}\l(\nabla\cdot\bPhi)\bn,\be_0\r_{\partial T}  \nonumber\\
		=&-\sum_{T\in {\cal T}_h}\l(\nabla\cdot\bPhi)\bn,\be_0\r_{\partial T}+\sum_{T\in {\cal T}_h}\l({\cal P}_h\nabla\cdot\bPhi)\bn,\be_0\r_{\partial T} \nonumber\\
		&-\sum_{T\in {\cal T}_h}(\nabla({\cal P}_h\nabla\cdot\bPhi),\be_0)_T  \nonumber\\
		=&-\sum_{T\in {\cal T}_h}\l(\nabla\cdot\bPhi)\bn,\be_0\r_{\partial T}
		+\sum_{T\in {\cal T}_h}\l({\cal P}_h\nabla\cdot\bPhi)\bn,\be_0-\be_b\r_{\partial T} \label{e0st31}\\
		&+\sum_{T\in {\cal T}_h}({\cal P}_h\nabla\cdot\bPhi,\nabla_w\cdot \be_h)_T \nonumber\\
		=&-\sum_{T\in {\cal T}_h}\l(\nabla\cdot\bPhi)\bn,\be_0-\be_b\r_{\partial T}+\sum_{T\in {\cal T}_h}\l({\cal P}_h\nabla\cdot\bPhi)\bn,\be_0-\be_b\r_{\partial T} \nonumber\\
		&+\sum_{T\in {\cal T}_h}({\cal P}_h\nabla\cdot\bPhi,\nabla_w\cdot \be_h)_T \nonumber\\
		=&-\sum_{T\in {\cal T}_h}\l(\nabla\cdot\bPhi-{\cal P}_h\nabla\cdot\bPhi)\bn,\be_0-\be_b\r_{\partial T}+\sum_{T\in {\cal T}_h}({\cal P}_h\nabla\cdot\bPhi,\nabla_w\cdot \be_h)_T \nonumber.
	\end{align}
	As for the second term $\sum_{T\in {\cal T}_h}({\cal P}_h\nabla\cdot\bPhi,\nabla_w\cdot \be_h)_T$, by using the definition of $\textbf{e}_h$, (\ref{projectiondiv}), the definition of ${\cal P}_h$, and  the fact  that $\sum_{T\in{\cal T}_h}(\nabla\cdot \bPhi-{\cal P}_h\nabla\cdot\bPhi, {\cal P}_h\nabla\cdot\bu)_T=0$, we get
	\begin{equation}\label{e0st32}
		\begin{aligned}
			&\sum_{T\in {\cal T}_h}({\cal P}_h\nabla\cdot\bPhi,\nabla_w\cdot \be_h)_T\\
			=&\sum_{T\in {\cal T}_h}({\cal P}_h\nabla\cdot\bPhi,\nabla_w\cdot (Q_h\bu-\bu_h))_T\\
			=&\sum_{T\in {\cal T}_h}({\cal P}_h\nabla\cdot\bPhi,\nabla\cdot\bu-\nabla_w\cdot\bu_h)_T\\
			=&\sum_{T\in {\cal T}_h}({\cal P}_h\nabla\cdot\bPhi-\nabla\cdot\bPhi,\nabla\cdot\bu-\nabla_w\cdot\bu_h)_T+\sum_{T\in {\cal T}_h}(\nabla\cdot\bPhi,\nabla\cdot\bu-\nabla_w\cdot\bu_h)_T\\
			=&\sum_{T\in {\cal T}_h}({\cal P}_h\nabla\cdot\bPhi-\nabla\cdot\bPhi,\nabla\cdot\bu-\nabla_w\cdot\bu_h)_T+\sum_{T\in {\cal T}_h}(\nabla\cdot\bPhi,\nabla\cdot\bu)_T\\
			&-\sum_{T\in {\cal T}_h}(\nabla_w\cdot(Q_h\bPhi),\nabla_w\cdot\bu_h)_T-\sum_{T\in {\cal T}_h}(\nabla\cdot\bPhi-{\cal P}_h\nabla \cdot\bPhi,\nabla_w\cdot\bu_h)_T\\
			=&\sum_{T\in {\cal T}_h}(\nabla\cdot\bPhi,\nabla\cdot\bu)_T-\sum_{T\in {\cal T}_h}(\nabla_w\cdot(Q_h\bPhi),\nabla_w\cdot\bu_h)_T\\
			&-\sum_{T\in {\cal T}_h}(\nabla\cdot\bPhi-{\cal P}_h\nabla \cdot\bPhi,\nabla\cdot\bu-{\cal P}_h\nabla \cdot\bu)_T.
		\end{aligned}
	\end{equation}
	Combining (\ref{e0st31}) and (\ref{e0st32}), we obtain
	\begin{align}
		I_2=&-(\lambda+\mu)\sum_{T\in {\cal T}_h}(\nabla(\nabla\cdot\bPhi),\be_0)_T \nonumber\\
		=&-(\lambda+\mu)\sum_{T\in {\cal T}_h}\l(\nabla\cdot\bPhi-{\cal P}_h\nabla\cdot\bPhi)\bn,\be_0-\be_b\r_{\partial T} \nonumber\\
		&+(\lambda+\mu)\sum_{T\in {\cal T}_h}({\cal P}_h\nabla\cdot\bPhi,\nabla_w\cdot \be_h)_T \nonumber\\
		=&-(\lambda+\mu)\sum_{T\in {\cal T}_h}\l(\nabla\cdot\bPhi-{\cal P}_h\nabla\cdot\bPhi)\bn,\be_0-\be_b\r_{\partial T} \label{e0st33}\\
		&+(\lambda+\mu)\sum_{T\in {\cal T}_h}(\nabla\cdot\bPhi,\nabla\cdot\bu)_T \nonumber\\
		&-(\lambda+\mu)\sum_{T\in {\cal T}_h}(\nabla_w\cdot(Q_h\bPhi),\nabla_w\cdot\bu_h)_T \nonumber\\
		&-(\lambda+\mu)\sum_{T\in {\cal T}_h}(\nabla\cdot\bPhi-{\cal P}_h\nabla \cdot\bPhi,\nabla\cdot\bu-{\cal P}_h\nabla \cdot\bu)_T \nonumber.
	\end{align}
	Testing (\ref{primal_model}) by using $\bPhi$, with the boundary condition (\ref{bdy}), we have
	\begin{equation}\label{e0st41}
		\mu(\nabla\bu,\nabla\bPhi)+(\lambda+\mu)(\nabla\cdot\bu,\nabla\cdot\bPhi)=(\bbf,\bPhi).
	\end{equation}
	According to the WG scheme (\ref{WGA_primal}), we get
	\begin{equation}\label{e0st42}
		\begin{aligned}
			&\mu\sum_{T\in {\cal T}_h}(\nabla_w\bu_h,\nabla_w(Q_h\bPhi))_T+(\lambda+\mu)\sum_{T\in {\cal T}_h}(\nabla_w\cdot\bu_h,\nabla_w\cdot(Q_h\bPhi))_T\\
			=&(\bbf,\bRT(Q_h\bPhi))-\mathcal{S}_h(\bu_h,Q_h\bPhi)\\
			=&(\bbf,\bRT(Q_h\bPhi))-\mathcal{S}_h(Q_h\bu,Q_h\bPhi)+\mathcal{S}_h(\be_h,Q_h\bPhi).
		\end{aligned}
	\end{equation}
	Furthermore, combining (\ref{e0st1}), (\ref{e0st2}), (\ref{e0st33}), (\ref{e0st41}), and (\ref{e0st42}), we arrive at
	\begin{equation}\label{e0st51}
		\begin{aligned}
			\|\be_0\|^2
			=&-\mu\sum_{T\in{\cal T}_h}\langle \be_0 - \be_b,(\nabla\bPhi-{\bm \Pi}_h\nabla\bPhi)\bn\rangle_{\partial T}\\
			&-(\lambda+\mu)\sum_{T\in {\cal T}_h}\l(\nabla\cdot\bPhi-{\cal P}_h\nabla\cdot\bPhi)\bn,\be_0-\be_b\r_{\partial T}\\
			&-\mu\sum_{T\in{\cal T}_h}(\nabla\bPhi-{\bm \Pi}_h\nabla\bPhi, \nabla\bu-{\bm \Pi}_h\nabla\bu)_T\\
			&-(\lambda+\mu)\sum_{T\in {\cal T}_h}(\nabla\cdot\bPhi-{\cal P}_h\nabla \cdot\bPhi,\nabla\cdot\bu-{\cal P}_h\nabla \cdot\bu)_T\\
			&+(\bbf,\bPhi-\bRT(Q_h\bPhi))+\mathcal{S}_h(Q_h\bu,Q_h\bPhi)-\mathcal{S}_h(\be_h,Q_h\bPhi).
		\end{aligned}	
	\end{equation}
	Next, we estimate each item in  the right-hand side of (\ref{e0st51})  one by one.\\
	(i) It follows from Lemma \ref{LemProEstimate} and Theorem \ref{theorem1} that 
	\begin{equation}\label{e0st52}
		\left|\mu\sum_{T\in{\cal T}_h}\langle \be_0 - \be_b,(\nabla\bPhi-{\bm \Pi}_h\nabla\bPhi) \bn\rangle_{\partial T}\right|\leq Ch\|\bPhi\|_2\3bar\be_h\3bar\leq Ch^{k+1}\|\bPhi\|_2\|\bu\|_{k+1},
	\end{equation}
	\begin{equation}\label{e0st53}
		\left|\mathcal{S}_h(\be_h,Q_h\bPhi)\right|\leq Ch\|\bPhi\|_2\3bar\be_h\3bar\leq Ch^{k+1}\|\bPhi\|_2\|\bu\|_{k+1}.
	\end{equation}
	(ii) Using the Cauchy-Schwarz inequality, the trace inequality, the estimate (\ref{A3}), and Theorem \ref{theorem1}, we arrive at
	\begin{equation}\label{e0st54}
		\begin{split}
			&\left|\sum_{T\in{\cal T}_h}(\lambda+\mu) \langle
			\nabla\cdot\bPhi- {\cal P}_h(\nabla\cdot\bPhi) ,
			({\textbf{e}_0-\textbf{e}_b})\cdot\bn \rangle_{\partial T}  \right|\\
			&\leq C(\lambda+\mu) \left (\sum_{T\in{\cal T}_h}h_T \| \nabla\cdot\bPhi- {\cal P}_h(\nabla\cdot\bPhi) \|^2_{\partial
				T}\right ) ^{ \frac{1}{2}} \left (\sum_{T\in{\cal T}_h}h_T^{-1}\|
			(\textbf{e}_0-\textbf{e}_b)\cdot\bn\|^2_{\partial T}\right ) ^{ \frac{1}{2}}
			\\
			&\leq  C (\lambda+\mu) h\| \nabla\cdot\bPhi\|_1\3bar\textbf{e}_h\3bar\\
			&\leq  C (\lambda+\mu) h^{k+1}\| \nabla\cdot\bPhi\|_1\|\bu\|_{k+1}.
		\end{split}
	\end{equation}
	From the Cauchy-Schwarz inequality, the triangle inequality, the trace inequality, and the estimate (\ref{A1}), we obtain
	\begin{equation}\label{e0st55}
		\begin{split}
			|\mathcal{S}_h(Q_h\bu, Q_h\bPhi)|=&\left|\sum_{T\in {\cal T}_h}h_T^{-1}\langle
			Q_0\bu-Q_b\bu,Q_0\bPhi-Q_b\bPhi\rangle_{\partial
				T}\right|\\
			\leq & C\left (\sum_{T\in {\cal T}_h}
			h_T^{-1}(\|Q_0\bu-\bu\|^2_{\partial
				T}+\|Q_b\bu-\bu\|^2_{\partial
				T})\right ) ^{\frac{1}{2}}\\
			&\left (\sum_{T\in {\cal T}_h}
			h_T^{-1}(\|Q_0\bPhi-\bPhi\|^2_{\partial T}+\|Q_b\bPhi-\bPhi\|^2_{\partial T})\right ) ^{\frac{1}{2}}\\
			\leq & C\left (\sum_{T\in {\cal T}_h}
			h_T^{-1}\|Q_0\bu-\bu\|^2_{\partial
				T}\right ) ^{\frac{1}{2}}\left (\sum_{T\in {\cal T}_h}
			h_T^{-1}\|Q_0\bPhi-\bPhi\|^2_{\partial T}\right ) ^{\frac{1}{2}}\\
			\leq & Ch^{k+1}\|\bu\|_{k+1} \|\bPhi\|_{2}.
		\end{split}
	\end{equation}
	(iii) By the Cauchy-Schwarz inequality and the estimate (\ref{A2}), it follows that
	\begin{equation}\label{e0st56}
		\begin{split}
			&\left|\mu \sum_{T\in{\cal T}_h}(\nabla\bPhi-{\bm \Pi}_h\nabla\bPhi, \nabla\bu-{\bm \Pi}_h\nabla\bu)_T \right|\\
			&\leq C \left (\sum_{T\in{\cal T}_h} \| \nabla\bPhi-{\bm \Pi}_h\nabla\bPhi \|^2_{
				T}\right ) ^{ \frac{1}{2}} \left (\sum_{T\in{\cal T}_h} \| \nabla\bu-{\bm \Pi}_h\nabla\bu \|^2_{
				T}\right ) ^{ \frac{1}{2}} \\
			&\leq Ch^{k+1}\|\bu\|_{k+1}\|\bPhi\|_2.
		\end{split}
	\end{equation}
	Applying the Cauchy-Schwarz inequality  and the estimate (\ref{A3}), we get
	\begin{equation}\label{e0st57}
		\begin{split}
			&\left| (\lambda+\mu)\sum_{T\in {\cal T}_h}(\nabla\cdot\bPhi-{\cal P}_h\nabla \cdot\bPhi,\nabla\cdot\bu-{\cal P}_h\nabla \cdot\bu)_T \right|\\
			&\leq C (\lambda+\mu)\left (\sum_{T\in{\cal T}_h} \| \nabla\cdot\bPhi-{\cal P}_h\nabla \cdot\bPhi \|^2_{
				T}\right ) ^{ \frac{1}{2}} \left (\sum_{T\in{\cal T}_h} \| \nabla\cdot\bu-{\cal P}_h\nabla \cdot\bu \|^2_{
				T}\right ) ^{ \frac{1}{2}} \\
			&\leq C(\lambda+\mu)h^{k+1}\|\bu\|_{k+1}\|\nabla\cdot\bPhi\|_1.
		\end{split}
	\end{equation}
	(iv) We then estimate term $(\bbf,\bPhi-\bRT(Q_h\bPhi))$. Let $Q_0^{k-1}$ be the $L^2$ 
	orthogonal projection onto $[P_{k-1}(T)]^d$, by using the definition of $\bRT$ in (\ref{EQ:RT1}), we get 
	\begin{equation*}
		\sum_{T\in{\cal T}_h}(Q_0^{k-1}\bbf,\bPhi-\bRT(Q_h\bPhi))_T=\sum_{T\in{\cal T}_h}(Q_0^{k-1}\bbf,\bPhi-Q_0\bPhi)_T=0.
	\end{equation*}
	Furthermore, by the Cauchy-Schwarz inequality, the estimates (\ref{A1}) and (\ref{RTEs}), we obtain
	\begin{equation}\label{e0st58}
		\begin{split}
			&\left|(\bbf,\bPhi-\bRT(Q_h\bPhi))\right|\\
			&=\left|\sum_{T\in{\cal T}_h}(\bbf-Q_0^{k-1}\bbf,\bPhi-\bRT(Q_h\bPhi))\right|\\
			&= \left (\sum_{T\in{\cal T}_h}\|\bbf-Q_0^{k-1}\bbf\|^2_T\right ) ^{ \frac{1}{2}} \left (\sum_{T\in{\cal T}_h}\|\bPhi-\bRT(Q_h\bPhi)\|^2_T\right ) ^{ \frac{1}{2}}\\
			&\leq Ch^{k}\|\bbf\|_{k}h\|\bPhi\|_1\\
			&\leq Ch^{k+1}\|\bbf\|_{k}\|\bPhi\|_2.
		\end{split}
	\end{equation}
	Combining the estimates (\ref{e0st52})-(\ref{e0st58}) in (i)-(iv) and the regularity assumption (\ref{dualregularity}), we arrive at
	\begin{equation*}
		\begin{split}
			\|\be_0\|^2&\leq Ch^{k+1}\|\bPhi\|_2\|\bu\|_{k+1}+C(\lambda+\mu)h^{k+1}\|\nabla\cdot \bPhi\|_1\|\bu\|_{k+1}+Ch^{k+1}\|\bbf\|_k\|\bPhi\|_2\\
			&\leq Ch^{k+1}(\|\bPhi\|_2+(\lambda+\mu)\|\nabla\cdot \bPhi\|_1)\|\bu\|_{k+1}+Ch^{k+1}\|\bbf\|_k\|\bPhi\|_2\\
			&\leq Ch^{k+1}(\|\bu\|_{k+1}+\|\bbf\|_k)\|\be_0\|,
		\end{split}
	\end{equation*}
	which  yields the error estimate (\ref{Q0Norm}). This completes the proof of the theorem.
\end{proof}

\begin{remark}
	Based on the proofs of Theorem \ref{theorem1} and Theorem \ref{THL2}, it becomes apparent that the introduction of an $H(div)$-conforming displacement reconstruction operator in our scheme effectively eradicates the dependence of displacement error on $\lambda$. Moreover, it is essential to highlight that our method does not require the imposition of high-order regularity assumptions, and it exhibits robustness even in scenarios where $\lambda\|\nabla\cdot\bu\|_k$ is unbounded. 
\end{remark}

\section{Numerical Results}\label{Numerical_Results}
This section provides numerical examples to validate the theoretical conclusions on the WG scheme (\ref{WGA_primal}) for the elasticity problems. We use triangular meshes as shown in Figure \ref{2Dlevel}.

\begin{example}\label{example4.1}(Convergence test)
	Consider the elasticity problems (\ref{primal_model})-(\ref{bc1}) in square domain $\Omega=(0,1)^2$. The exact solution $\bu$ is chosen as follows
	\begin{eqnarray*}
		\bu=\left(\begin{array}{ccc}
			\sin(\pi x) \cos(\pi y) \\
			\cos(\pi x) \sin(\pi y)
		\end{array}\right).
	\end{eqnarray*}
	Set  $Lam\acute{e}$ constants $\mu=1$ and $\lambda=1$.
\end{example}

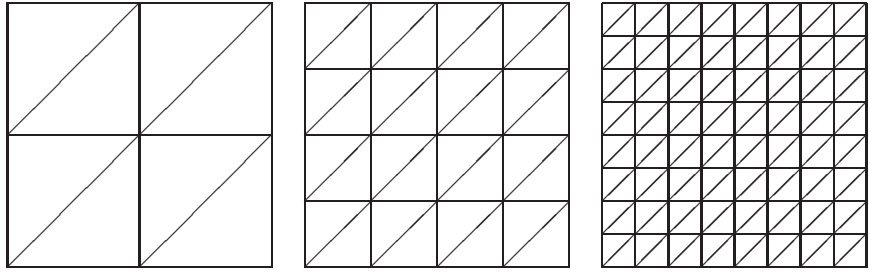
\begin{figure}[h!]
	\begin{center} \setlength\unitlength{1.25 pt}
		\begin{picture}(260,80)(0,0)
			\def\tr{\begin{picture}(20,20)(0,0)\put(0,0){\line(1,0){20}}\put(0,20){\line(1,0){20}}
					\put(0,0){\line(0,1){20}} \put(20,0){\line(0,1){20}} \put(0,0){\line(1,1){20}}
			\end{picture}}
			{\setlength\unitlength{2.5 pt}
				\multiput(0,0)(20,0){2}{\multiput(0,0)(0,20){2}{\tr}}}
			
			{\setlength\unitlength{1.25 pt}
				\multiput(90,0)(20,0){4}{\multiput(0,0)(0,20){4}{\tr}}}
			{\setlength\unitlength{0.625 pt}
				\multiput(360,0)(20,0){8}{\multiput(0,0)(0,20){8}{\tr}}}
	\end{picture}\end{center}
	\caption{The first three levels of triangular grids in square domain}
	\label{2Dlevel}
\end{figure}

The results presented in Table \ref{tab:example4.1} demonstrate the evaluation of error and numerical convergence order. Specifically, Table \ref{tab:example4.1} shows that the approximate displacement $\bu_h$ converges towards the exact solution $\bu$, indicating the effectiveness of the WG method. Additionally, it is observed that the numerical convergence order in $L^2$-norm for the $P_1-P_1$ WG elements is $O(h^2)$, while for the $P_2-P_2$ WG elements, it is $O(h^3)$. This is consistent with theoretical analysis.

\begin{table}[ht]
	\centering
	\caption{Error and convergence order of displacement $\bu$ in Example \ref{example4.1}}
	\label{tab:example4.1}
	\begin{tabular}{ccccc}
		\hline
		Level&$\3barQ_h\bu-\bu_h\3bar$&order&$\|Q_0\bu-\bu_0\|$&order\\
		\hline
		\multicolumn{5}{c}{by the $P_1-P_1$ WG elements} \\
		\hline
		2     & 4.7978e-01 &   --  & 3.9675e-02 & --\\
		3     & 2.7672e-01 & 0.7939 & 1.1751e-02 & 1.7554  \\
		4     & 1.4363e-01 & 0.9461 & 3.0846e-03 & 1.9297  \\
		5     & 7.2500e-02 & 0.9863 & 7.8110e-04 & 1.9815  \\
		6     & 3.6337e-02 & 0.9965 & 1.9592e-04 & 1.9953  \\
		7     & 1.8179e-02 & 0.9992 & 4.9020e-05 & 1.9988  \\
		\hline
		\multicolumn{5}{c}{by the $P_2-P_2$ WG elements} \\
		\hline
		2     & 1.1821e-01 & --   & 5.4616e-03 & -- \\
		3     & 3.3769e-02 & 1.8075 & 7.6365e-04 & 2.8383 \\
		4     & 8.8861e-03 & 1.9261 & 9.9637e-05 & 2.9382 \\
		5     & 2.2674e-03 & 1.9705 & 1.2670e-05 & 2.9752 \\
		6     & 5.7186e-04 & 1.9873 & 1.5956e-06 & 2.9893 \\
		7     & 1.4354e-04 & 1.9942 & 2.0014e-07 & 2.9951 \\
		\hline
	\end{tabular}
\end{table}

\begin{example}\label{example4.3}(Locking-free test)
	Consider the elasticity problems (\ref{primal_model})-(\ref{bc1}) on square domain $\Omega=(0,1)^2$ with triangular grids in Figure \ref{2Dlevel}. The exact solution $\bu$ is chosen as follows \cite{wangwangliu2023}
	\begin{eqnarray*}
		\bu=\left( \begin{aligned}
			& -(1-\cos(2\pi x))\sin (2 \pi y) \\
			&\quad\, (1-\cos(2\pi y))\sin (2 \pi x) \\
		\end{aligned} \right)
		+\dfrac{1}{\lambda+\mu}\left( \begin{aligned}
			& \sin ( \pi x)\sin ( \pi y) \\
			& \sin ( \pi x)\sin ( \pi y)\\
		\end{aligned} \right),
	\end{eqnarray*}
	where $Lam\acute{e}$ constant $\mu=1$ and
	\begin{eqnarray*}
		\bbf&=&-4  \pi^2\mu \left( \begin{aligned}
			& - \cos(2\pi x)\sin(2\pi y) - \sin(2\pi y)(cos(2\pi x) - 1)\\
			&\quad \,\cos(2\pi y)\sin(2\pi x) +\sin(2\pi x)(cos(2\pi y) - 1)\\
		\end{aligned} \right)\\
		&+&\dfrac{2\pi^2\mu}{\lambda+\mu}\left( \begin{aligned}
			& \sin ( \pi x)\sin ( \pi y) \\
			& \sin ( \pi x)\sin ( \pi y)\\
		\end{aligned} \right)
		-\left( \begin{aligned}
			& \pi^2 \cos(\pi x)\cos(\pi y) - \pi^2\sin(\pi x)\sin(\pi y)\\
			& \pi^2 \cos(\pi x)\cos(\pi y) - \pi^2\sin(\pi x)\sin(\pi y)\\
		\end{aligned} \right),
	\end{eqnarray*}
	\begin{eqnarray*}
		\widehat{\bg}=\left( \begin{aligned}
			& -(1-\cos(2\pi x))\sin (2 \pi y) \\
			&\quad\, (1-\cos(2\pi y))\sin (2 \pi x) \\
		\end{aligned} \right)
		+\dfrac{1}{\lambda+\mu}\left( \begin{aligned}
			& \sin ( \pi x)\sin ( \pi y) \\
			& \sin ( \pi x)\sin ( \pi y)\\
		\end{aligned} \right).
	\end{eqnarray*}
\end{example}

In the computation, we take $\lambda=1$, $\lambda=10^2$, $\lambda=10^4$, and $\lambda=10^6$, respectively. The numerical results by the  $P_1-P_1$ WG elements are provided in Tables \ref{tab:example4.3} - \ref{tab:example4.32}, whereas the numerical outcomes by the $P_2-P_2$ WG elements are illustrated in Figure \ref{exam2_3}.

Tables \ref{tab:example4.3} - \ref{tab:example4.32} demonstrate that optimal convergence rate is achieved in all cases, and the error and the convergence rate are independent of the parameter $\lambda$.
To further illustrate the locking-free property of the WG numerical scheme, we plot the errors in various norms by the $P_2-P_2$ WG elements under different parameter values $\lambda$ in Figure \ref{exam2_3}. Our numerical results reveal that there are no significant differences among the displacement errors in various norms under different parameter values $\lambda$, showing that the WG scheme remains unaffected by the parameter $\lambda$, thus validating its locking-free characteristic.

\begin{table}[H]
	\centering
	\caption{ Error and convergence order of $\bu$ with $\lambda= 1$ by the $P_1-P_1$ WG elements in Example \ref {example4.3}}
	\label{tab:example4.3}
	\begin{tabular}{ccccc}
		\hline
		Level&$\3barQ_h\bu-\bu_h\3bar$&order&$\|Q_0\bu-\bu_0\|$&order\\
		\hline
		2     & 8.4869e+00 & --   & 7.3595e-01 & --\\
		3     & 3.4276e+00 & 1.3080 & 1.6826e-01 & 2.1289 \\
		4     & 1.3982e+00 & 1.2937 & 3.9187e-02 & 2.1022 \\
		5     & 6.3134e-01 & 1.1471 & 9.5265e-03 & 2.0404 \\
		6     & 3.0518e-01 & 1.0488 & 2.3620e-03 & 2.0120 \\
		7     & 1.5117e-01 & 1.0135 & 5.8919e-04 & 2.0032 \\
		\hline
	\end{tabular}
\end{table}

\begin{table}[H]
	\centering
	\caption{Error and convergence order of $\bu$ with $\lambda= 10^2$ by the $P_1-P_1$ WG elements in Example \ref {example4.3}}
	\label{tab:example4.311}
	\begin{tabular}{ccccc}
		\hline
		Level&$\3barQ_h\bu-\bu_h\3bar$&order&$\|Q_0\bu-\bu_0\|$&order\\
		\hline
		2     & 8.5356e+00 &  --   & 7.2393e-01 & -- \\
		3     & 3.4411e+00 & 1.3106 & 1.6342e-01 & 2.1472 \\
		4     & 1.3952e+00 & 1.3024 & 3.7740e-02 & 2.1145 \\
		5     & 6.2668e-01 & 1.1547 & 9.1410e-03 & 2.0457 \\
		6     & 3.0226e-01 & 1.0519 & 2.2638e-03 & 2.0136 \\
		7     & 1.4963e-01 & 1.0144 & 5.6451e-04 & 2.0036 \\
		\hline
	\end{tabular}
\end{table}

\begin{table}[H]
	\centering
	\caption{Error and convergence order of $\bu$ with $\lambda= 10^4$ by the $P_1-P_1$ WG elements in Example \ref {example4.3}}
	\label{tab:example4.312}
	\begin{tabular}{ccccc}
		\hline
		Level&$\3barQ_h\bu-\bu_h\3bar$&order&$\|Q_0\bu-\bu_0\|$&order\\
		\hline
		2     & 8.5375e+00 & --    & 7.2370e-01 & -- \\
		3     & 3.4418e+00 & 1.3107 & 1.6334e-01 & 2.1475 \\
		4     & 1.3954e+00 & 1.3025 & 3.7716e-02 & 2.1146 \\
		5     & 6.2671e-01 & 1.1548 & 9.1348e-03 & 2.0457 \\
		6     & 3.0227e-01 & 1.0520 & 2.2622e-03 & 2.0137 \\
		7     & 1.4963e-01 & 1.0144 & 5.6413e-04 & 2.0036 \\
		\hline
	\end{tabular}
\end{table}

\begin{table}[H]
	\centering
	\caption{Error and convergence order of $\bu$ with $\lambda= 10^6$ by the $P_1-P_1$ WG elements in Example \ref {example4.3}}
	\label{tab:example4.32}
	\begin{tabular}{ccccc}
		\hline
		Level&$\3barQ_h\bu-\bu_h\3bar$&order&$\|Q_0\bu-\bu_0\|$&order\\
		\hline
		2     & 8.5375e+00 & --  & 7.2370e-01 & -- \\
		3     & 3.4418e+00 & 1.3107 & 1.6334e-01 & 2.1475 \\
		4     & 1.3954e+00 & 1.3025 & 3.7716e-02 & 2.1146 \\
		5     & 6.2671e-01 & 1.1548 & 9.1348e-03 & 2.0457 \\
		6     & 3.0227e-01 & 1.0520 & 2.2622e-03 & 2.0137 \\
		7     & 1.4963e-01 & 1.0144 & 5.6402e-04 & 2.0039 \\
		\hline
	\end{tabular}
\end{table}

\begin{figure}[h!]
	\centering
	\includegraphics[width=1 \columnwidth,height=0.6 \linewidth]{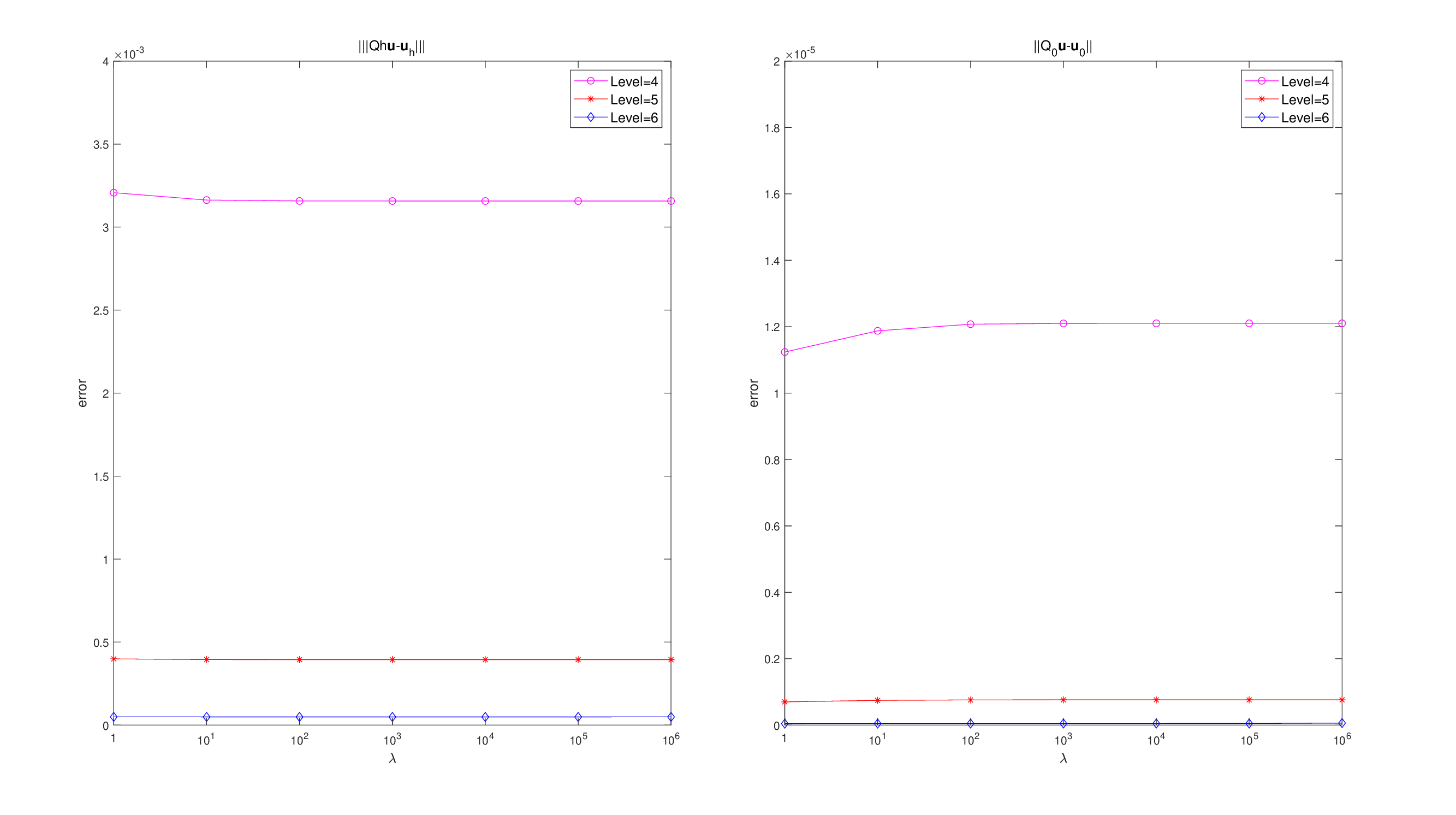}
	\caption{The error of displacement approximation by the $P_2-P_2$ WG elements under different parameter values $\lambda$ from Example \ref{example4.3}. Left: The displacement error in $\3bar\cdot\3bar$-norm. Right: The displacement error in $L^2$-norm}
	\label{exam2_3}
\end{figure}

\begin{example}\label{example4.4}(Locking-free test with unbounded $\lambda \|\nabla\cdot\bu\|_k$)
	To better demonstrate the robustness of the  WG Algorithm \ref{algo-primal},  we consider the case of  $\lambda \|\nabla\cdot\bu\|_k$ unbounded in the following example. Besides, we introduce the standard WG scheme as Algorithm \ref{algo-primal_old} for comparison.
	\begin{algorithm}
		\caption{Standard WG Algorithm}
		\label{algo-primal_old}
		A standard	WG scheme for weak formulation (\ref{model-weak}) is given by finding $\bu_h=\{\bu_0, \bu_b\}\in V_h$ with $\bu_b
		= Q_b \widehat{\bg}$ on $\Gamma $ such that 
		\begin{eqnarray}\label{WGA_primal_old}
			\mathcal{A}^s_h(\bu_h,\bv)=(\bbf,\bv_0), \qquad\forall\, \bv=\{\bv_0, \bv_b\}\in
			V_h^0.
		\end{eqnarray}
	\end{algorithm}
	
	In this example, we consider the elasticity problems (\ref{primal_model})-(\ref{bc1}) on square domain $\Omega=(0,1)^2$ with triangular grids in Figure \ref{2Dlevel}. The exact solution $\bu$ is chosen as follows
	\begin{eqnarray*}
		\bu=\left( \begin{aligned}
			& \sin(\pi x)\sin(\pi y) \\
			& \sin(\pi x)\sin(\pi y) \\
		\end{aligned} \right),
	\end{eqnarray*}
	where $Lam\acute{e}$ constant $\mu=1$ and
	\begin{align*}
		\bbf=&-\mu \left( \begin{aligned}
			& -2\pi^2\sin(\pi x)\sin(\pi y)\\
			&-2\pi^2\sin(\pi x)\sin(\pi y)\\
		\end{aligned} \right)\\
		&-(\lambda+\mu)\left( \begin{aligned}
			& \pi^2 \cos(\pi x)\cos(\pi y) - \pi^2\sin(\pi x)\sin(\pi y) \\
			& \pi^2 \cos(\pi x)\cos(\pi y) - \pi^2\sin(\pi x)\sin(\pi y)\\
		\end{aligned} \right),
	\end{align*}
	\begin{eqnarray*}
		\widehat{\bg}=\left( \begin{aligned}
			& \sin(\pi x)\sin(\pi y) \\
			& \sin(\pi x)\sin(\pi y) \\
		\end{aligned} \right).
	\end{eqnarray*}
\end{example}

\begin{table}[!ht]
	\centering
	\caption{Error and convergence order of displacement $\bu$ in Example \ref {example4.4} by the $P_1-P_1$ WG elements}
	\label{tab:example4.4D1}
	\scalebox{0.90}{
		\begin{tabular}{ccccccccc}
			\hline
			& \multicolumn{4}{c}{Robust WG Algorithm \ref{algo-primal}} & \multicolumn{4}{c}{Standard WG Algorithm \ref{algo-primal_old}} \\
			\hline
			Level&$\3barQ_h\bu-\bu_h\3bar$&order&$\|Q_0\bu-\bu_0\|$&order&$\3barQ_h\bu-\bu_h\3bar$&order&$\|Q_0\bu-\bu_0\|$&order\\
			\hline
			\multicolumn{9}{c}{$\lambda=1$}\\
			\hline
			2     & 1.0388e+00 &  --  & 6.7609e-02 &  --   & 1.9489e+00 &  --   & 1.3797e-01 & --\\
			3     & 4.8696e-01 & 1.0930 & 1.6014e-02 & 2.0779 & 6.6479e-01 & 1.5517 & 2.3331e-02 & 2.5641 \\
			4     & 2.3457e-01 & 1.0538 & 3.9062e-03 & 2.0355 & 2.6238e-01 & 1.3412 & 4.4972e-03 & 2.3752 \\
			5     & 1.1580e-01 & 1.0185 & 9.6880e-04 & 2.0115 & 1.1958e-01 & 1.1336 & 1.0097e-03 & 2.1552 \\
			6     & 5.7689e-02 & 1.0052 & 2.4166e-04 & 2.0032 & 5.8176e-02 & 1.0395 & 2.4431e-04 & 2.0471 \\
			\hline
			\multicolumn{9}{c}{$\lambda=10^2$}\\
			\hline
			2     & 1.0121e+00 &  --  & 4.0674e-02 &   -- & 6.1276e+01 &  --   & 4.2550e+00 & --\\
			3     & 4.8052e-01 & 1.0748 & 7.4022e-03 & 2.4581 & 1.6791e+01 & 1.8676 & 5.8763e-01 & 2.8562 \\
			4     & 2.3221e-01 & 1.0491 & 1.6058e-03 & 2.2046 & 4.3478e+00 & 1.9494 & 7.6391e-02 & 2.9434 \\
			5     & 1.1470e-01 & 1.0176 & 3.8405e-04 & 2.0639 & 1.1060e+00 & 1.9750 & 9.7094e-03 & 2.9760 \\
			6     & 5.7151e-02 & 1.0050 & 9.4883e-05 & 2.0171 & 2.8240e-01 & 1.9695 & 1.2250e-03 & 2.9866 \\
			\hline
			\multicolumn{9}{c}{$\lambda=10^4$}\\
			\hline
			2     & 1.0121e+00 &  --  & 4.0387e-02 &  --  & 6.0245e+03 &  --   & 4.1843e+02 & -- \\
			3     & 4.8052e-01 & 1.0747 & 7.3213e-03 & 2.4637 & 1.6499e+03 & 1.8685 & 5.7752e+01 & 2.8570 \\
			4     & 2.3221e-01 & 1.0491 & 1.5889e-03 & 2.2040 & 4.2666e+02 & 1.9512 & 7.5044e+00 & 2.9441 \\
			5     & 1.1470e-01 & 1.0176 & 3.8029e-04 & 2.0629 & 1.0808e+02 & 1.9809 & 9.5315e-01 & 2.9770 \\
			6     & 5.7151e-02 & 1.0050 & 9.3978e-05 & 2.0167 & 2.7172e+01 & 1.9919 & 1.1998e-01 & 2.9899 \\
			\hline
			\multicolumn{9}{c}{$\lambda=10^6$}\\
			\hline
			2     & 1.0121e+00 &  --   & 4.0384e-02 & --   & 6.0235e+05 & --   & 4.1836e+04 & -- \\
			3     & 4.8052e-01 & 1.0747 & 7.3206e-03 & 2.4638 & 1.6496e+05 & 1.8685 & 5.7742e+03 & 2.8570 \\
			4     & 2.3221e-01 & 1.0491 & 1.5888e-03 & 2.2040 & 4.2658e+04 & 1.9512 & 7.5030e+02 & 2.9441 \\
			5     & 1.1470e-01 & 1.0176 & 3.8026e-04 & 2.0629 & 1.0806e+04 & 1.9809 & 9.5298e+01 & 2.9770 \\
			6     & 5.7151e-02 & 1.0050 & 9.3970e-05 & 2.0167 & 2.7167e+03 & 1.9919 & 1.1996e+01 & 2.9899 \\
			\hline
		\end{tabular}
	}
\end{table}

\begin{table}[!ht]
	\centering
	\caption{Error and convergence order of displacement $\bu$ in Example \ref {example4.4} by the $P_2-P_2$ WG elements}
	\label{tab:example4.4D2}
	\scalebox{0.90}{
		\begin{tabular}{ccccccccc}
			\hline
			& \multicolumn{4}{c}{Robust WG Algorithm \ref{algo-primal}} & \multicolumn{4}{c}{Standard WG Algorithm \ref{algo-primal_old}} \\
			\hline
			Level&$\3bar\bu_h-Q_h\bu\3bar$&order&$\|\bu_0-Q_0\bu\|$&order&$\3bar\bu_h-Q_h\bu\3bar$&order&$\|\bu_0-Q_0\bu\|$&order\\
			\hline
			\multicolumn{9}{c}{$\lambda=1$}\\
			\hline
			2     & 1.7563e-01 &  --  & 1.1594e-02 &  --  & 2.2323e-01 &  --  & 1.6555e-02 & -- \\
			3     & 4.6492e-02 & 1.9175 & 1.4888e-03 & 2.9612 & 4.9644e-02 & 2.1688 & 1.6706e-03 & 3.3088 \\
			4     & 1.1873e-02 & 1.9694 & 1.8790e-04 & 2.9862 & 1.2070e-02 & 2.0401 & 1.9386e-04 & 3.1073 \\
			5     & 2.9928e-03 & 1.9881 & 2.3573e-05 & 2.9947 & 3.0051e-03 & 2.0060 & 2.3762e-05 & 3.0283 \\
			6     & 7.5080e-04 & 1.9950 & 2.9511e-06 & 2.9978 & 7.5157e-04 & 1.9994 & 2.9570e-06 & 3.0064 \\
			\hline
			\multicolumn{9}{c}{$\lambda=10^2$}\\
			\hline
			2     & 1.7665e-01 &   --  & 1.1347e-02 &   --  & 5.4862e+00 &  --   & 4.3735e-01 & -- \\
			3     & 4.7125e-02 & 1.9064 & 1.4510e-03 & 2.9671 & 7.1692e-01 & 2.9359 & 2.8535e-02 & 3.9380 \\
			4     & 1.2061e-02 & 1.9662 & 1.8283e-04 & 2.9885 & 9.1586e-02 & 2.9686 & 1.8167e-03 & 3.9733 \\
			5     & 3.0425e-03 & 1.9870 & 2.2925e-05 & 2.9955 & 1.1818e-02 & 2.9541 & 1.1592e-04 & 3.9701 \\
			6     & 7.6349e-04 & 1.9946 & 2.8695e-06 & 2.9981 & 1.6224e-03 & 2.8648 & 7.6768e-06 & 3.9165 \\
			\hline
			\multicolumn{9}{c}{$\lambda=10^4$}\\
			\hline
			2     & 1.7670e-01 &  --   & 1.1347e-02 &   --  & 5.3982e+02 &   --  & 4.2981e+01 & -- \\
			3     & 4.7144e-02 & 1.9061 & 1.4506e-03 & 2.9676 & 7.0482e+01 & 2.9371 & 2.8029e+00 & 3.9387 \\
			4     & 1.2066e-02 & 1.9661 & 1.8275e-04 & 2.9887 & 8.9481e+00 & 2.9776 & 1.7782e-01 & 3.9784 \\
			5     & 3.0438e-03 & 1.9870 & 2.2914e-05 & 2.9956 & 1.1257e+00 & 2.9907 & 1.1180e-02 & 3.9914 \\
			6     & 7.6382e-04 & 1.9946 & 2.8680e-06 & 2.9981 & 1.4113e-01 & 2.9958 & 7.0060e-04 & 3.9962 \\
			\hline
			\multicolumn{9}{c}{$\lambda=10^6$}\\
			\hline
			2     & 1.7670e-01 &  -- & 1.1347e-02 &  --  & 5.3973e+04 &  --   & 4.2974e+03 & -- \\
			3     & 4.7144e-02 & 1.9061 & 1.4506e-03 & 2.9676 & 7.0471e+03 & 2.9371 & 2.8025e+02 & 3.9387 \\
			4     & 1.2066e-02 & 1.9661 & 1.8275e-04 & 2.9887 & 8.9468e+02 & 2.9776 & 1.7779e+01 & 3.9784 \\
			5     & 3.0439e-03 & 1.9870 & 2.2914e-05 & 2.9955 & 1.1256e+02 & 2.9907 & 1.1178e+00 & 3.9914 \\
			6     & 7.6387e-04 & 1.9945 & 2.8687e-06 & 2.9978 & 1.4110e+01 & 2.9958 & 7.0048e-02 & 3.9962 \\
			\hline
		\end{tabular}
	}
\end{table}

In the computation, we repeat the computational procedure for various parameter values with $\lambda=1,10^2,10^4,10^6$.
The obtained numerical results are presented in Tables \ref{tab:example4.4D1} -\ref{tab:example4.4D2}. In the case of $P_1-P_1$ and $P_2-P_2$ WG elements, it is observed that the displacement errors resulting from the standard WG Algorithm \ref{algo-primal_old} exhibit an increasing trend as the parameter $\lambda$ is incremented. However, the WG Algorithm \ref{algo-primal} shows a distinctive characteristic: the displacement errors remain unaffected by variations in the $Lam\acute{e}$ constant $\lambda$. This finding signifies the robustness of the WG Algorithm \ref{algo-primal} with respect to the $Lam\acute{e}$ constant $\lambda$.

\section*{Acknowledgments}
Y. Wang was supported by the the National Natural Science Foundation of China (grant No. 12171244). R. Zhang and R. Wang were supported by the National Natural Science Foundation of China (grant No. 11971198, 12001230), the National Key Research and Development Program of China (grant No. 2020YFA0713602), and the Key Laboratory of Symbolic Computation and Knowledge Engineering of Ministry of Education of China

\bibliographystyle{siam}
\bibliography{lib}

\begin{thebibliography}{10}

\bibitem{Adams2005}
{\sc S.~Adams and B.~Cockburn}, {\em A mixed finite element method for
  elasticity in three dimensions}, J. Sci. Comput., 25 (2005), pp.~515--521.

\bibitem{VirtualelasticArtioli}
{\sc E.~Artioli, S.~de~Miranda, C.~Lovadina, and L.~Patruno}, {\em A family of
  virtual element methods for plane elasticity problems based on the
  {H}ellinger-{R}eissner principle}, Comput. Methods Appl. Mech. Engrg., 340
  (2018), pp.~978--999.

\bibitem{babuska-suri1}
{\sc I.~Babu\v{s}ka and M.~Suri}, {\em On locking and robustness in the finite
  element method}, SIAM J. Numer. Anal., 29 (1992), pp.~1261--1293.

\bibitem{VirtualelasticVeiga}
{\sc L.~Beir\~{a}o~da Veiga, F.~Brezzi, and L.~D. Marini}, {\em Virtual
  elements for linear elasticity problems}, SIAM J. Numer. Anal., 51 (2013),
  pp.~794--812.

\bibitem{b2008}
{\sc S.~C. Brenner and L.~R. Scott}, {\em The mathematical theory of finite
  element methods. Texts in Applied Mathematics}, vol.~15, Springer, New York,
  3rd~ed., 2008.

\bibitem{BrennerSusanne}
{\sc S.~C. Brenner and L.-Y. Sung}, {\em Linear finite element methods for
  planar linear elasticity}, Math. Comp., 59 (1992), pp.~321--338.

\bibitem{brezzi2012mixed}
{\sc F.~Brezzi and M.~Fortin}, {\em Mixed and hybrid finite element methods},
  Springer, Berlin, 1991.

\bibitem{CR2012}
{\sc C.~Carstensen and H.~Rabus}, {\em The adaptive nonconforming {FEM} for the
  pure displacement problem in linear elasticity is optimal and robust}, SIAM
  J. Numer. Anal., 50 (2012), pp.~1264--1283.

\bibitem{chenxie2016}
{\sc G.~Chen and X.~Xie}, {\em A robust weak {G}alerkin finite element method
  for linear elasticity with strong symmetric stresses}, Comput. Methods Appl.
  Math., 16 (2016), pp.~389--408.

\bibitem{chenHu2018}
{\sc L.~Chen, J.~Hu, and X.~Huang}, {\em Fast auxiliary space preconditioners
  for linear elasticity in mixed form}, Math. Comp., 87 (2018), pp.~1601--1633.

\bibitem{wangyanqiu1}
{\sc W.~Chen, F.~Wang, and Y.~Wang}, {\em Weak {G}alerkin method for the
  coupled {D}arcy-{S}tokes flow}, IMA J. Numer. Anal., 36 (2016), pp.~897--921.

\bibitem{Ciarlet}
{\sc P.~G. Ciarlet}, {\em Mathematical Elasticity: Three-Dimensional
  Elasticity}, Society for Industrial and Applied Mathematics, Philadelphia,
  PA, 2021.

\bibitem{guermond2021finite}
{\sc A.~Ern and J.-L. Guermond}, {\em Finite Elements I: Approximation and
  Interpolation}, Springer, Cham, 2021.

\bibitem{VirtualelasticGain}
{\sc A.~L. Gain, C.~Talischi, and G.~H. Paulino}, {\em On the virtual element
  method for three-dimensional linear elasticity problems on arbitrary
  polyhedral meshes}, Comput. Methods Appl. Mech. Engrg., 282 (2014),
  pp.~132--160.

\bibitem{Hansbo}
{\sc P.~Hansbo and M.~G. Larson}, {\em Discontinuous {G}alerkin methods for
  incompressible and nearly incompressible elasticity by {N}itsche's method},
  Comput. Methods Appl. Mech. Engrg., 191 (2002), pp.~1895--1908.

\bibitem{Liu}
{\sc G.~Harper, J.~Liu, S.~Tavener, and B.~Zheng}, {\em Lowest-order weak
  {G}alerkin finite element methods for linear elasticity on rectangular and
  brick meshes}, J. Sci. Comput., 78 (2019), pp.~1917--1941.

\bibitem{Harper2020}
{\sc G.~B. Harper}, {\em Weak {G}alerkin {F}inite {E}lement {M}ethods for
  {E}lasticity and {C}oupled {F}low {P}roblems}, ProQuest LLC, Ann Arbor, MI,
  2020.
\newblock Thesis (Ph.D.)--Colorado State University.

\bibitem{hj2015}
{\sc J.~Hu}, {\em A new family of efficient conforming mixed finite elements on
  both rectangular and cuboid meshes for linear elasticity in the symmetric
  formulation}, SIAM J. Numer. Anal., 53 (2015), pp.~1438--1463.

\bibitem{hs2007}
{\sc J.~Hu and Z.-C. Shi}, {\em Lower order rectangular nonconforming mixed
  finite elements for plane elasticity}, SIAM J. Numer. Anal., 46 (2007/08),
  pp.~88--102.

\bibitem{HMY2018}
{\sc X.~Hu, L.~Mu, and X.~Ye}, {\em A weak {G}alerkin finite element method for
  the {N}avier-{S}tokes equations}, J. Comput. Appl. Math., 362 (2019),
  pp.~614--625.

\bibitem{huowangwangzhang2023}
{\sc F.~Huo, R.~Wang, Y.~Wang, and R.~Zhang}, {\em A locking-free weak galerkin
  finite element method for linear elasticity problems}, 2023.

\bibitem{jlf}
{\sc J.~Jia, Y.-J. Lee, Y.~Feng, Z.~Wang, and Z.~Zhao}, {\em Hybridized weak
  {G}alerkin finite element methods for {B}rinkman equations}, Electron. Res.
  Arch., 29 (2021), pp.~2489--2516.

\bibitem{LeeC}
{\sc C.-O. Lee, J.~Lee, and D.~Sheen}, {\em A locking-free nonconforming finite
  element method for planar linear elasticity}, Adv. Comput. Math., 19 (2003),
  pp.~277--291.

\bibitem{liuyujie2022}
{\sc Y.~Liu and J.~Wang}, {\em A locking-free {$P_0$} finite element method for
  linear elasticity equations on polytopal partitions}, IMA J. Numer. Anal., 42
  (2022), pp.~3464--3498.

\bibitem{mulin2019}
{\sc L.~Mu}, {\em Weak {G}alerkin based a posteriori error estimates for second
  order elliptic interface problems on polygonal meshes}, J. Comput. Appl.
  Math., 361 (2019), pp.~413--425.

\bibitem{Mulin2022}
\leavevmode\vrule height 2pt depth -1.6pt width 23pt, {\em A uniformly robust
  {$h({\rm div})$} weak {G}alerkin finite element methods for {B}rinkman
  problems}, SIAM J. Numer. Anal., 58 (2020), pp.~1422--1439.

\bibitem{wwbi}
{\sc L.~Mu, J.~Wang, Y.~Wang, and X.~Ye}, {\em A weak {G}alerkin mixed finite
  element method for biharmonic equations}, in Numerical solution of partial
  differential equations: theory, algorithms, and their applications, vol.~45
  of Springer Proc. Math. Stat., Springer, New York, 2013, pp.~247--277.

\bibitem{mwy}
{\sc L.~Mu, J.~Wang, and X.~Ye}, {\em Weak {G}alerkin finite element methods
  for the biharmonic equation on polytopal meshes}, Numer. Methods Partial
  Differential Equations, 30 (2014), pp.~1003--1029.

\bibitem{MLW14}
{\sc L.~Mu, J.~Wang, X.~Ye, and S.~Zhang}, {\em A weak {G}alerkin finite
  element method for the {M}axwell equations}, J. Sci. Comput., 65 (2015),
  pp.~363--386.

\bibitem{MuyezhangStkoes2021}
{\sc L.~Mu, X.~Ye, and S.~Zhang}, {\em A stabilizer-free, pressure-robust, and
  superconvergence weak {G}alerkin finite element method for the {S}tokes
  equations on polytopal mesh}, SIAM J. Sci. Comput., 43 (2021),
  pp.~A2614--A2637.

\bibitem{penghui2020}
{\sc H.~Peng, Q.~Zhai, R.~Zhang, and S.~Zhang}, {\em Weak {G}alerkin and
  continuous {G}alerkin coupled finite element methods for the {S}tokes-{D}arcy
  interface problem}, Commun. Comput. Phys., 28 (2020), pp.~1147--1175.

\bibitem{shidongyang2012}
{\sc D.~Shi and C.~Xu}, {\em An anisotropic locking-free nonconforming
  triangular finite element method for planar linear elasticity problem}, J.
  Comput. Math., 30 (2012), pp.~124--138.

\bibitem{SLM2017}
{\sc S.~Shields, J.~Li, and E.~A. Machorro}, {\em Weak {G}alerkin methods for
  time-dependent {M}axwell's equations}, Comput. Math. Appl., 74 (2017),
  pp.~2106--2124.

\bibitem{SFLZ}
{\sc L.-n. Sun, Y.~Feng, Y.~Liu, and R.~Zhang}, {\em The modified weak
  {G}alerkin finite element method for solving {B}rinkman equations}, J. Math.
  Res. Appl., 39 (2019), pp.~657--676.

\bibitem{wangMuStkoes2021}
{\sc B.~Wang and L.~Mu}, {\em Viscosity robust weak {G}alerkin finite element
  methods for {S}tokes problems}, Electron. Res. Arch., 29 (2021),
  pp.~1881--1895.

\bibitem{wangchunmei}
{\sc C.~Wang}, {\em New discretization schemes for time-harmonic {M}axwell
  equations by weak {G}alerkin finite element methods}, J. Comput. Appl. Math.,
  341 (2018), pp.~127--143.

\bibitem{lockingw}
{\sc C.~Wang, J.~Wang, R.~Wang, and R.~Zhang}, {\em A locking-free weak
  {G}alerkin finite element method for elasticity problems in the primal
  formulation}, J. Comput. Appl. Math., 307 (2016), pp.~346--366.

\bibitem{wysec2}
{\sc J.~Wang and X.~Ye}, {\em A weak {G}alerkin finite element method for
  second-order elliptic problems}, J. Comput. Appl. Math., 241 (2013),
  pp.~103--115.

\bibitem{wy1202}
\leavevmode\vrule height 2pt depth -1.6pt width 23pt, {\em A weak {G}alerkin
  mixed finite element method for second order elliptic problems}, Math. Comp.,
  83 (2014), pp.~2101--2126.

\bibitem{wy1302}
\leavevmode\vrule height 2pt depth -1.6pt width 23pt, {\em A weak {G}alerkin
  finite element method for the stokes equations}, Adv. Comput. Math., 42
  (2016), pp.~155--174.

\bibitem{wangwangliu2023}
{\sc R.~Wang, Z.~Wang, and J.~Liu}, {\em Penalty-free any-order weak {G}alerkin
  {FEM}s for linear elasticity on quadrilateral meshes}, J. Sci. Comput., 95
  (2023), pp.~Paper No. 20, 22.

\bibitem{wangzhang2018}
{\sc R.~Wang and R.~Zhang}, {\em A weak {G}alerkin finite element method for
  the linear elasticity problem in mixed form}, J. Comput. Math., 36 (2018),
  pp.~469--491.

\bibitem{Wihler}
{\sc T.~P. Wihler}, {\em Locking-free adaptive discontinuous {G}alerkin {FEM}
  for linear elasticity problems}, Math. Comp., 75 (2006), pp.~1087--1102.

\bibitem{Yisongyang}
{\sc S.-Y. Yi}, {\em A lowest-order weak {G}alerkin method for linear
  elasticity}, J. Comput. Appl. Math., 350 (2019), pp.~286--298.

\bibitem{Zhaite2}
{\sc Q.~Zhai, H.~Xie, R.~Zhang, and Z.~Zhang}, {\em The weak {G}alerkin method
  for elliptic eigenvalue problems}, Commun. Comput. Phys., 26 (2019),
  pp.~160--191.

\bibitem{Zhaite1}
{\sc Q.~Zhai and R.~Zhang}, {\em Lower and upper bounds of {L}aplacian
  eigenvalue problem by weak {G}alerkin method on triangular meshes}, Discrete
  Contin. Dyn. Syst. Ser. B, 24 (2019), pp.~403--413.

\bibitem{zzlw2018}
{\sc J.~Zhang, K.~Zhang, J.~Li, and X.~Wang}, {\em A weak {G}alerkin finite
  element method for the {N}avier-{S}tokes equations}, Commun. Comput. Phys.,
  23 (2018), pp.~706--746.

\bibitem{ZhangZhai15}
{\sc R.~Zhang and Q.~Zhai}, {\em A weak {G}alerkin finite element scheme for
  the biharmonic equations by using polynomials of reduced order}, J. Sci.
  Comput., 64 (2015), pp.~559--585.

\bibitem{ZhangLin2019}
{\sc T.~Zhang and T.~Lin}, {\em An analysis of a weak {G}alerkin finite element
  method for stationary {N}avier-{S}tokes problems}, J. Comput. Appl. Math.,
  362 (2019), pp.~484--497.

\bibitem{zhangyujie2015}
{\sc Y.~Zhang}, {\em A weak {G}alerkin mixed finite element method for linear
  elasticity equations}, ProQuest LLC, Ann Arbor, MI, 2015.
\newblock Thesis (Ph.D.)--Oklahoma State University.

\end{thebibliography}
\end{document}